\documentclass[a4paper,10pt]{article}
\usepackage{latexsym,amsmath,amsthm,amssymb}
\usepackage{a4wide}
\usepackage{hyperref}
\usepackage{marginnote}
\usepackage{color}
\usepackage{cite}
\hypersetup{
pdftitle={CKN--manifold}   
pdfauthor={Van Hoang},
colorlinks = true,
linkcolor = magenta,
citecolor = blue,
}


\theoremstyle{plain}
\newtheorem{theorem}{Theorem}[section]



\newtheorem{lemma}[theorem]{Lemma}
\newtheorem{corollary}[theorem]{Corollary}

\theoremstyle{definition}

\theoremstyle{remark}



\renewcommand{\thefootnote}{\arabic{footnote}}

\def\R{\mathbb R}


\def\al{\alpha}
\def\om{\omega}
\def\Om{\Omega}
\def\be{\beta}
\def\de{\delta}
\def\De{\Delta} 

\def\si{\sigma}
\def\vphi{\varphi}
\def\ep{\epsilon}
\def\na{\nabla}
\def\pa{\partial}
\def\la{\langle} 
\def\ra{\rangle} 
\def\lt{\left}
\def\rt{\right}

\def\i0i{\int_0^\infty}

\def\bD{\text{\bf D}}

\def\B{\mathbb B}


\numberwithin{equation}{section}


\title{Sharp Hardy and Rellich type inequalities on Cartan--Hadamard manifolds and their improvements}
\author{Van Hoang Nguyen
\footnote{Institut de Math\'ematiques de Toulouse, Universit\'e Paul Sabatier, 118 Route de Narbonne, 31062 Toulouse c\'edex 09, France.}
}

\begin{document}
\maketitle


\renewcommand{\thefootnote}{}

\footnote{Email: \href{mailto: Van Hoang Nguyen <van-hoang.nguyen@math.univ-toulouse.fr>}{van-hoang.nguyen@math.univ-toulouse.fr}, \href{mailto: Van Hoang Nguyen <vanhoang0610@yahoo.com>}{vanhoang0610@yahoo.com}}

\footnote{2010 \emph{Mathematics Subject Classification\text}: 26D10, 31C12, 53C20, 53C21.}

\footnote{\emph{Key words and phrases\text}: Hardy inequality, Rellich inequality, critical Hardy inequality, critical Rellich inequality, Cartan--Hadamard manifolds, sharp constant}

\renewcommand{\thefootnote}{\arabic{footnote}}
\setcounter{footnote}{0}

\begin{abstract}
In this paper, we prove several new Hardy type inequalities (such as the weighted Hardy inequality, weighted Rellich inequality, critical Hardy inequality and critical Rellich inequality) for radial derivations (i.e., the derivation along the geodesic curve) on Cartan--Hadamard manifolds. By Gauss lemma, our new Hardy inequality are stronger than the classical one. We also established the improvements of these inequalities in terms of sectional curvature of underlying manifolds which illustrate the effect of curvature to these inequalities. Furthermore, we obtain some improvements of Hardy and Rellich inequality in hyperbolic space $\mathbb H^n$. Especially, we show that our new Rellich inequality is indeed stronger the classical one in hyperbolic space $\mathbb H^n$.
\end{abstract}

\section{Introduction}
The motivation of this paper is to prove some new Hardy and Rellich type inequalities on Cartan--Hadamard manifolds (i.e., complete, simply connected Riemannian manifolds with non-positive sectional curvature). All our obtained inequalities are in sharp forms. They are stronger than and generalize several classical inequalities in the Euclidean space $\R^n$. Let $n\geq 2$ and $p \in (1,n)$, the classical Hardy inequality in $\R^n$ states that
\begin{equation}\label{eq:HardyRn}
\lt(\frac{n-p}p\rt)^p \int_{\R^n} \frac{|f|^p}{|x|^{p}} dx \leq \int_{\R^n} |\na f|^p dx,\qquad f\in C_0^\infty(\R^n).
\end{equation}
The constant $(n-p)^p/p^p$ in \eqref{eq:HardyRn} is sharp. A similar inequality with the same best constant also holds if $\R^n$ is replaced by any domain $\Om$ containing the origin. The Hardy inequality \eqref{eq:HardyRn} plays important role in several branches of mathematics such as the partial differential equations, spectral theory, geometry etc. We refer the reader to \cite{BEL15,BM97,Davies1998} for reviews of this subject and to \cite{BV97,FT2002} for the improvements of this inequality when $\R^n$ is replaced by the bounded domains containing the origin. 

In the critical case $p=n$, the inequality \eqref{eq:HardyRn} fails for any constant. However, the following inequality holds in the unit ball $B$ with center at origin of $\R^n$
\begin{equation}\label{eq:criticalET}
\int_B |\na f|^n dx \geq \frac{(n-1)^n}{n^n} \int_B \frac{|f|^n}{|x|^n \lt(1+ \ln \frac1{|x|}\rt)^n} dx,\quad f\in C_0^\infty(B),
\end{equation}
for any $n\geq 2$ (see, e.g., \cite{ET99}). The constant $(n-1)^n/n^n$ is the best constant in \eqref{eq:criticalET}. It also was shown in \cite{ET99} that \eqref{eq:criticalET} is equivalent to the critical case of the Sobolev--Lorentz inequality. It is remarked that \eqref{eq:criticalET} is not invariant under the scaling as \eqref{eq:HardyRn}. A scaling invariant version of \eqref{eq:criticalET} (nowaday called the critical Hardy inequality) was recently established by Ioku and Ishiwata \cite{II2015},
\begin{equation}\label{eq:criticalII}
\frac{(n-1)^n}{n^n} \int_B \frac{|f|^n}{|x|^n \lt(\ln \frac1{|x|}\rt)^n} dx \leq \int_B \lt|\frac x{|x|} \cdot \na f(x)\rt|^n dx, \quad f\in C_0^\infty(B).
\end{equation}
Again, the constant $(n-1)^n/n^n$ is sharp. The inequality \eqref{eq:criticalII} in bounded domains was also discussed in \cite{II2015}. It is surprise that the critical Hardy inequality \eqref{eq:criticalII} is equivalent to the Hardy inequality \eqref{eq:HardyRn} in larger dimension spaces (see \cite{ST2017}). We refer the readers to \cite{IIO2016} for a global scaling invariant version of \eqref{eq:criticalII} which we do not mention here.

Recently, there is an enourmous work to generalize the Hardy inequality to many different settings. For examples, the fractional Hardy inequality was established in \cite{FMT2013,Yafaev,Nguyen2016,FLS2008,FS2008} and references therein. The Hardy inequality also was proved on group structure, e.g., on Heisenberg groups \cite{GL,D'ambrosio,NZW,DGP}, on polarisable groups \cite{GK}, on Carnot groups \cite{JS,Lian}, on stratified groups \cite{CCR,RS2017carnot}, and on more general homogeneous groups \cite{RS2017,RS2016,RS2017L2}. The Hardy inequality on Riemannian manifold $(M,g)$ was studied by Carron \cite{Carron} in the weighted $L^2-$form under some geometric assumption on the weighted function $\rho$. More precisely, he proved the following inequality
\[
\frac{(\gamma -1)^2}{4}\int_M \frac{u^2}{\rho^2} dV_g \leq \int_M |\na_g u|_g^2 dV_g,\quad u\in C_0^\infty(M),
\]
where $\rho$ is a nonnegative function on $M$ such that $|\na_g \rho|_g =1$, $\De _g\rho \geq \gamma/\rho$, and $dV_g, \na_g$, $\De_g$ and $|\cdot|_g$ denote the volume element, gradient, Laplace--Beltrami operator and the length of a vector field with respect to the Riemannian metric $g$ on $M$ respectively, and the set $\rho^{-1}(0)$ is a compact set of zero capacity. Under the same hypotheses on the function $\rho$, Kombe and \"Ozaydin \cite{KO2009} extended the result of Carron to the case $p\not=2$ and they presented an application to the hyperbolic space $\mathbb H^n$ with $\rho$ being the distance function from the origin point $0$. We refer the reader to \cite{DAD2014} and the references therein for more results in this direction. 

The sharp Hardy inequality on Cartan--Hadamard manifolds $(M,g)$ was recently obtained by Yang, Su and Kong \cite{YSK}
\begin{equation}\label{eq:HardyYSK}
\int_M \frac{|f|^p}{\rho^{p+\beta}}dV_g \leq \lt(\frac{p}{n-p-\beta}\rt)^p \int_M \frac{|\na_g f|_g^p}{\rho^\beta} dV_g,\quad f\in C_0^\infty(M),
\end{equation}
where $n\geq 3$, $p\in (1,n)$, $\beta < n-p$ and $\rho(x) =d(x,P)$ for a fix point $P$ in $M$ (we refer the reader to Section 2 for more about on Cartan--Hadamard manifolds). Furthermore, the constant $p^p/(n-p-\beta)^p$ appeared in \eqref{eq:HardyYSK} is sharp. Our first aim in this paper is to establish an stronger version of \eqref{eq:HardyYSK} as follows: let $\pa_\rho$ denote the derivation along the geodesic curves starting from $P$, then our first result is as follows
\[
\int_M \frac{|f|^p}{\rho^{p+\beta}} dV_g \leq \lt(\frac p{n-p-\beta}\rt)^p \int_M \frac{|\pa_\rho f|^p}{\rho^\beta} dV_g, \qquad f\in C_0^\infty(M)
\]
with the constant $p^p/(n-p-\beta)^p$ being sharp. Obviously, our result is stronger than \eqref{eq:HardyYSK} because of Gauss's lemma $|\pa_\rho f| \leq |\na_g f|_g$. This inequality also extends a result of Ioku et al. \cite{IIO2017} (see also \cite{MOW2017}) on $\R^n$ to the Cartan--Hadamard manifolds. We also study the Hardy inequality in the critical case $p = n-\beta$ and obtain the following inequality
\[
\lt(\frac{p-1}p\rt)^p \int_{B_1(P)} \frac{|f|^p}{\rho^n \lt(\ln \frac1{\rho}\rt)^p} dV_g \leq \int_{B_1(P)} \frac{|\pa_\rho f|^p}{\rho^{n-p}} dV_g,\quad f\in C_0^\infty(B_1(P))
\]
with $(p-1)^p/p^p$ being the sharp constant and $B_1(P)$ denoting the geodesic unit ball around $P$. The case $p=n$ gives us an extension of the inequality \eqref{eq:criticalII} to the Cartan--Hadamard manifolds. In my knowledge, the critical Hardy inequality on Cartan--Hadamard manifolds seems to be new in literature.

Rellich inequalities are the higher order derivative version of Hardy inequality. Let us denote
\begin{equation}\label{eq:constantc}
c(n,2l,\beta,p) = \lt(\prod_{i=0}^{l-1} \frac{p^2}{(n-2p-\beta-2ip)(n(p-1)+\beta +2ip)}\rt)^p
\end{equation}
for $l\geq 1$, $p\in (1, n/(2l))$ and $-n(p-1) < \beta < n-2lp$. It was proved in \cite{DH1998} that
\begin{equation}\label{eq:RellichRneven}
\int_{\R^n} \frac{|f|^p}{|x|^{\beta+ 2lp}} dx \leq c(n,2l,\beta,p) \int_{\R^n} \frac{|\De^l f|^p}{|x|^{\beta}} dx,\quad f\in C_0^\infty(\R^n)
\end{equation}
if $l\geq 1$, $p\in (1, n/(2l))$ and $-2(p-1) < \beta < n-2lp$, and
\begin{equation}\label{eq:RellichRnodd}
\int_{\R^n} \frac{|f|^p}{|x|^{\beta+ (2l+1)p}} dx \leq \frac{p^p}{(n-p-\beta)^p}c(n,2l,p+\beta,p) \int_{\R^n} \frac{|\na \De^l f|^p}{|x|^{\beta}} dx,\quad f\in C_0^\infty(\R^n)
\end{equation}
if $l\geq 1$, $p\in (1, n/(2l+1))$ and $2-3p < \beta < n-(2l+1)p$. Again, these inequalities \eqref{eq:RellichRneven} and \eqref{eq:RellichRnodd} are shown to be sharp in \cite{DH1998}. To prove \eqref{eq:RellichRneven} and \eqref{eq:RellichRnodd}, Davies and Hinz firstly proved \eqref{eq:RellichRneven} for $l=1$ and then iterating this inequality together with the weighted Hardy inequality. The iterative approach is now a standard method to study the inequality of this type. The other approaches are given by Mitidieri \cite{Mi} based on the divergence theorem and the Rellich--Pohozaev type identities \cite{Mi1}. For the improvements of \eqref{eq:RellichRneven} and \eqref{eq:RellichRnodd} on bounded domains and $p=2$, the reader may consult the paper \cite{TZ2007}.

The Rellich inequality was generalized to the Riemannian manifolds in the work of Kombe and \"Ozaydin \cite{KO2009} for $p=2$
\begin{equation}\label{eq:KOR}
\int_M \frac{|\De_g u|^2}{\rho^\al} dV_g \geq \frac{(C-\al-3)^2(C+\al+1)^2}{16} \int_M \frac{|u|^2}{\rho^{\al+4}} dV_g, \quad u\in C_0^\infty(M)
\end{equation}
where $\rho$ is a nonnegative weight function such that $|\na_g \rho|_g =1$, $\De_g \rho \geq C/\rho$ with $\al > -2$ and $\al < C-3$. In the case of Cartan--Hadamard manifold $(M,g)$, the inequality \eqref{eq:KOR}  was  extended by Yang, Su and Kong \cite{YSK} to  $p\not=2$ and to higher order derivatives with $\rho(x) = d(x,P)$ the geodesic distance from $x$ to a fix point $P\in M$. The results of Yang, Su and Kong \cite{YSK} read as follows:
\begin{equation}\label{eq:YSKeven}
\int_{M} \frac{|f|^p}{\rho^{\beta + 2lp}} dV_g \leq c(n,2l,\beta,p) \int_{M} \frac{|\De_g^l f|^p}{\rho^{\beta}} dV_g,\quad f\in C_0^\infty(M),
\end{equation}
with $l\geq 1$, $p\in (1,n/(2l))$ and $-2(p-1) < \beta < n-2lp$, and 
\begin{equation}\label{eq:YSKodd}
\int_{M} \frac{|f|^p}{\rho^{\beta + (2l+1)p}} dV_g \leq \frac{p^p}{(n-p-\beta)^p}c(n,2l,\beta+p,p) \int_{M} \frac{|\na_g \De_g^l f|_g^p}{\rho^{\beta}} dV_g,\quad f\in C_0^\infty(M),
\end{equation}
with $l\geq 1$, $p\in (1,n/(2l+1))$ and $2-3p < \beta < n-(2l+1)p$. Yang, Su and Kong also proved that these inequalities are sharp on $M$. Our next results show that the inequalities \eqref{eq:YSKeven} and \eqref{eq:YSKodd} also true with the same best constants if we replace the Laplace--Beltrami operator by the radial Laplace $\De_{g,\rho}$ taked along the geodesic curve starting from $P$ (see Section 2 below for the precise definition of $\De_{g,\rho}$). The following sharp inequalities will be proved in this paper
\begin{equation}\label{eq:RMeven}
\int_{M} \frac{|f|^p}{\rho^{2lp + \beta}} dV_g \leq c(n,2l,\beta,p) \int_M \frac{|\De_{g,\rho}^lf|^p }{\rho^\beta} dV_g,\quad f\in C_0^\infty(M),
\end{equation}
if $l\geq 1$, $p\in (1,n/(2l))$ and $n(1-p) < \beta < n-2lp$, and
\begin{equation}\label{eq:RModd}
\int_{M} \frac{|f|^p}{\rho^{(2l+1)p + \beta}} dV_g \leq \frac{p^p}{(n-p-\beta)^p}c(n,2l,p+\beta,p)\int_M \frac{|\pa_\rho \De_{g,\rho}^l f|^p}{\rho^\beta} dV_g,\quad f\in C_0^\infty(M),
\end{equation}
if $l\geq 1$, $p\in (1, n/(2l+1))$ and $n-(n+1)p < \beta < n-(2l+1)p$. It is emphasized here that the domain of $\beta$ is extended in your inequalities comparing with \eqref{eq:YSKeven} and \eqref{eq:YSKodd}. In the critical $\beta = n -2lp$ or $n=(2l+1)p$, we will prove the following critical Rellich type inequalities which we believe to be new in Cartan--Hadamard manifolds
\begin{equation}\label{eq:CRMeven}
\int_{B_1(P)} \frac{|f|^p}{\rho(x)^{n} \lt(\ln \frac1{\rho(x)}\rt)^p} dV_g \leq \lt( p'\frac{2^{1-l}}{(l-1)!}\prod_{i=0}^{l-1} \frac1{n -2i-2}\rt)^p \int_{B_1(P)} \frac{|\De_{g,\rho}^lf|^p }{\rho(x)^{n-2lp}} dV_g,
\end{equation}
with $p' =p/(p-1)$, and
\begin{equation}\label{eq:CRModd}
\int_{B_1(P)} \frac{|f|^p}{\rho(x)^{n}\lt(\ln \frac1{\rho(x)}\rt)^p} dV_g \leq \lt(p'\frac1{2^l l!}\prod_{i=0}^{l-1} \frac1{n -2i-2}\rt)^p \int_{B_1(P)} \frac{|\pa_\rho \De_{g,\rho}^l f|^p}{\rho(x)^{n-(2l+1)p}} dV_g,
\end{equation}
for any $f\in C_0^\infty(B_1(P)$. We also show that these inequalities \eqref{eq:CRMeven} and \eqref{eq:CRModd} are sharp. It is worthy to note that  our inequalities \eqref{eq:CRMeven} and \eqref{eq:CRModd} contain the critical Rellich inequalities on the Euclidean space $\R^n$ due to Adimurthi and Santra \cite{AS2009}. In fact, Adimurthi and Santra proved \eqref{eq:CRMeven} and \eqref{eq:CRModd} for radial functions supported in the unit ball centerd at origin in $\R^n$ and for radial function $f$, $\De_{g,\rho} f$ is exactly its Laplace. We should mention here that in the setting of homogeneous groups, the inequalities of the type \eqref{eq:RMeven}, \eqref{eq:RModd}, \eqref{eq:CRMeven} and \eqref{eq:CRModd} was recently established by the author in \cite{Nguyen2017} which extend the result of Ruzhansky and Suragan \cite{RS2017,RS2016,RS2017L2} to the higher order derivatives. 

The last remark is that if the sectional curvature $K_M$ of $(M,g)$ satisfies $K_M \leq -b \leq 0$ along each plane section at each point of $M$, then all our obtained inequalities in this paper will be strengthened. In this situation, we will prove the quantitative versions of Hardy, Rellich, critical Hardy and critical Rellich type inequalities on Cartan--Hadamard manifolds in terms of the upper bounded of $K_M$ (see Sections 3 and 4 below). Especially, in the case of hyperbolic space $\mathbb H^n$ with $K_{\mathbb H^n} \equiv -1$, we show that our obtained Rellich inequality is stronger than the classical one in \cite{KO2009} (see Section 5 below). This is an immediate consequence of the following Machihara--Ozawa--Wadade type result in hyperbolic space (see Theorem \ref{MOWtype})
\[
\int_{\mathbb H^n} |\De_{g,\rho} f|^2 dV_g \leq \int_{\mathbb H^n} |\De_g f|^2 dV_g,\quad f\in C_0^\infty(\mathbb H^n).
\]
In Euclidean space, the previous inequality was prove by Machihara, Ozawa and Wadade \cite{MOW2017}. We also prove several improved Hardy and Rellich type inequalities in hyperbolic space $\mathbb H^n$ (see Theorems \ref{Hyperbolic}, \ref{ImprovedR} and \ref{eq:higherorder} below).

The rest of this paper is organized as follows. In Section 2, we recall some basic notions and properties of Riemannian manifolds (especially, of Cartan--Hadamard manifolds) which we frequently use in this paper. In Sectiona 3, we prove Hardy type inequalities on Cartan--Hadamard manifolds such as weighted Hardy inequality, critical Hardy inequality. We also establish the quantitative improvements for these Hardy type inequalities. Section 4, we prove Rellich type inequalities (both in the critical case and subcritical case) and their quantitative version on Cartan--Hadamard manifolds. In the last section, we establish some improvements of Hardy and Rellich inequalities in hyperbolic spaces.


\section{Preliminaries}
In this section, we list some useful properties of Riemannian manifolds which will be used in this paper. We refer the reader to the book of Helgason \cite{Helgason} and the book of Gallot, Hulin, and Lafontaine \cite{GHL} for standard references. Let $(M,g)$ be an $n-$dimensional complete Riemannian manifold. In a local coordinate system $\{x^i\}_{i=1}^n$, we can write
\[
g = \sum_{i,j=1}^n g_{ij} dx^i dx^.
\]
In such a local coordinate system, the Laplace-Beltrami operator $\De_g$ with respect to the metric $g$ is of the form
\[
\De_g = \sum_{i,j=1}^n \frac1{\sqrt{|g|}} \frac{\partial}{\partial x_i}\lt(\sqrt{|g|} g^{ij} \frac{\partial}{\partial x^j}\rt),
\]
where $|g| =\text{\rm det}(g_{ij})$ and $(g^{ij}) = (g_{ij})^{-1}$. Let us denote by $\na_g$ the corresponding gradient. Then
\[
\la \na_g f,\na_g h\ra = \sum_{i,j=1}^n g^{ij} \frac{\pa f}{\pa x^i} \frac{\pa h}{\pa x^j}.
\]
For simplicity, we shall use the notation $|\na_g f|_g=\sqrt{\la \na_g f,\na_g f\ra}$.


Let $K_M$ be the sectional curvature on $(M,g)$. A Riemannian manifold $(M,g)$ is called Cartan--Hadamard manifolds if it is complete, simply connected and with non-positive sectional curvature, i.e., $K_M \leq 0$ along each plane section at each point of $M$. If $(M,g)$ is Cartan--Hadamard manifold, then for each point $P\in M$, $M$ contains no points conjugate to $P$. Moreover, the exponential map $\text{\rm Exp}_P: T_PM \to M$ is a diffeomorphism, where $T_PM$ is the tangent space to $M$ at $P$ (see, e.g., \cite[Chapter ${\rm I}$]{Helgason})

Let $(M,g)$ be a Cartan--Hadamard manifold. Fix a point $P\in M$ and denote by $\rho(x) =d(x,P)$ for all $x \in M$, where $d$ denotes the geodesic distance on $M$. Note that $\rho(x)$ is smooth on $M\setminus\{P\}$ and satisfies 
\[
|\na_g \rho(x)|_g  =1,\qquad x \in M \setminus\{P\}.
\]
Moreover, since $\text{\rm Exp}_P$ is a diffeomorphism, then the function
\[
\rho(x)^2 = \|\text{\rm Exp}_P^{-1}(x)\|^2 \in C^\infty(M).
\]
The radial derivation $\pa_\rho = \frac{\pa}{\pa \rho}$ along geodesic curve starting from $P$ is defined for any function $f$ on $M$ by 
\[
\pa_\rho f(x) = \frac{d (f\circ \text{\rm Exp}_P)}{dr}(\text{\rm Exp}_P^{-1}(x)),
\]
here $\frac{d}{dr}$ denotes the radial derivation on $T_PM$, i.e., 
\[
\frac{d}{dr} F(u) = \lt\la \frac{u}{|u|}, \na F(u)\rt\ra,\qquad u\in T_PM \setminus\{0\}.
\]

For any $\de >0$, denote by $B_\de(P) = \{x\in M\, :\, \rho(x) < \de\}$ the geodesic ball in $M$ with center at $P$ and radius $\de$. We introduce the density function $J(u,t)$ of the volume form in normal coordinates as follows (see, e.g., \cite[pp. $166-167$]{GHL}). Choose an orthonormal basis $\{u, e_2,\ldots,e_n\}$ on $T_PM$ and let $c(t) =\text{\rm Exp}_P(tu)$ be a geodesic curve. The Jacobian fields $\{Y_i(t)\}_{i=2}^n$ satisfy $Y_i(0) =0$, $Y_i'(0) =e_i$, so that the density function can be given by
\[
J(u,t) = t^{1-n} \sqrt{\text{\rm det}(\la Y_i(t), Y_j(t)\ra)},\quad t >0.
\]
We note that $J(u,t)$ does not depend on $\{e_2,\ldots,e_n\}$ and $J(u,t)\in C^\infty(T_PM \setminus\{0\})$ by the definition of $J(u,t)$. Moreover, if we set $J(u,0)\equiv 1$ then $J(u,t) \in C(T_PM)$ and has the following asymptotic expansion
\begin{equation}\label{eq:density}
J(u,t) = 1 + O(t^2)
\end{equation}
as $t\to 0^+$ since $Y_i(t)$ has the asymptotic expansion (see, e.g., \cite[p. $169$]{GHL})
\[
Y_i(t) = t e_i -\frac{t^3}6 R(c'(t), e_i) c'(t) + o(t^3),
\]
as $t\to 0^+$, where $R(\cdot, \cdot)$ is the curvature tensor on $M$.

From the definition of $J(u,t)$, we have the following polar coordinate on $M$
\begin{equation}\label{eq:polar}
\int_M f dV_g = \int_0^\infty \int_{S^{n-1}} f(\text{\rm Exp}_P(tu)) J(u,t) t^{n-1} dt du, 
\end{equation}
where $du$ denotes the canonical measure of the unit sphere of $T_PM$. Moreover, the Laplacian of the distance function $\rho(x)$ has the following expansion via the function $J(u,t)$ (see, e.g., \cite[$4.B.2$]{GHL})
\begin{equation}\label{eq:Lapofdist}
\De_g \rho = \frac{n-1}\rho + \frac{J'(u,\rho)}{J(u,\rho)},\quad \rho >0,
\end{equation}
where $J'(u,\rho) = \frac{\pa J(u,\rho)}{\pa \rho}$. Therefore, for any radial function $f(\rho)$ on $M$, we have
\begin{equation}\label{eq:Lapofradial}
\De_g f(\rho) = f''(\rho) + \lt(\frac{n-1}\rho + \frac{J'(u,\rho)}{J(u,\rho)}\rt) f'(\rho).
\end{equation}
Let us introduce the radial Laplacian of a function $f$ by
\begin{equation}\label{eq:radialLap}
\De_{g,\rho f(x)} = \pa_\rho^2 f(x) + \lt(\frac{n-1}\rho + \frac{J'(u,\rho)}{J(u,\rho)}\rt) \pa_\rho f(x),\quad x = \text{\rm Exp}_P(tu).
\end{equation}
Note that if $K_M$ is constant then $J(u,t)$ depends only on $t$. We denote by $J_b(t)$ the corresponding density function if $K_M\equiv -b$ for some $b\geq 0$, i.e., 
\[
J_b(t) = \begin{cases}
1 &\mbox{if $b =0$}\\
\lt(\frac{\sinh(\sqrt{b}t)}{\sqrt{b} t}\rt)^{n-1} &\mbox{if $b>0$}.
\end{cases}
\]
For $b \geq 0$, we consider the function $\text{\bf ct}_b :(0,\infty) \to \R$ defined by
\[
\text{\bf ct}_b(t) = 
\begin{cases}
\frac1 t&\mbox{if $b=0$}\\
\sqrt{b} \coth(\sqrt{b} t)&\mbox{if $b>0$},
\end{cases}
\]
and the function $\text{\bf D}_b :[0,\infty) \to \R$ defined by
\[
\text{\bf D}_b(t) = 
\begin{cases}
0 &\mbox{if $t =0$}\\
t \text{\bf ct}_b(t) -1 &\mbox{if $t>0$}.
\end{cases}
\]
Clearly, we have $\text{\bf D}_b \geq 0$. If the section curvature $K_M$ on $M$ satisfies $K_M \leq -b$ then the Bishop--Gunther comparison theorem (see, e.g., \cite[p. $172$]{GHL} for its proof) says that
\begin{equation}\label{eq:Bishop}
\frac{J'(u,t)}{J(u,t)} \geq \frac{J_b'(t)}{J_b(t)} = \frac{n-1}t \bD_b(t),\qquad t>0.
\end{equation}




\section{Hardy type inequalities}
This section is devoted to proved the Hardy type inequalities on Cartan--Hadamard manifolds $(M,g)$. By \eqref{eq:Bishop}, the function $t \mapsto \rho(u,t)$ is non-decreasing monotone on $(0,\infty)$ for any $u\in S^{n-1}$. We first have the following weighted Hardy inequalities 

\begin{theorem}\label{Hardy}
Let $(M,g)$ be a $n-$dimensional Cartan--Hadamard manifolds. Suppose that $n \geq 2$, $p\in (1,n)$ and $\beta < n-p$. There holds for any $f \in C_0^\infty(M)$
\begin{equation}\label{eq:subcriticalH}
\int_M \frac{|f(x)|^p}{\rho(x)^{p+\beta}} dV_g \leq \lt(\frac p{n-p-\beta}\rt)^p \int_M \frac{|\pa_\rho f(x)|^p}{\rho(x)^\beta} dV_g.
\end{equation}
Furthermore, the constant $(\frac p{n-p-\beta})^p$ is the best constant in \eqref{eq:subcriticalH}.
\end{theorem}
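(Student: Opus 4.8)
The plan is to pass to geodesic polar (normal) coordinates centered at $P$ via \eqref{eq:polar}, reduce \eqref{eq:subcriticalH} to a one‑dimensional weighted Hardy inequality along each geodesic ray, and then integrate over the unit sphere $S^{n-1}\subset T_PM$. Concretely, after replacing $f$ by $|f|$ (which only decreases $|\pa_\rho f|$ pointwise, since $|\pa_\rho|f||\le|\pa_\rho f|$), set $\phi_u(t)=f(\mathrm{Exp}_P(tu))$ for $u\in S^{n-1}$, so that $\pa_\rho f(\mathrm{Exp}_P(tu))=\phi_u'(t)$; by \eqref{eq:polar} it suffices to prove, with a constant independent of $u$,
\begin{equation*}
\int_0^\infty |\phi_u(t)|^p\,t^{n-1-p-\beta}\,J(u,t)\,dt\ \le\ \Big(\frac{p}{n-p-\beta}\Big)^p\int_0^\infty |\phi_u'(t)|^p\,t^{n-1-\beta}\,J(u,t)\,dt .
\end{equation*}

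First I would record that $J'(u,t)\ge0$ for all $t>0$, which follows from the Bishop--G\"unther inequality \eqref{eq:Bishop} with $b=0$ (legitimate because $K_M\le0$), since $\bD_0\equiv0$. Next, because $\beta<n-p$ one has $t^{n-1-p-\beta}=\frac1{n-p-\beta}\frac{d}{dt}\big(t^{n-p-\beta}\big)$, so an integration by parts on $(0,\infty)$ is available; the boundary term at $t=0$ vanishes because $n-p-\beta>0$ and $J(u,t)\to1$ by \eqref{eq:density}, while the one at $+\infty$ vanishes since $f$ has compact support. This gives
\begin{equation*}
\int_0^\infty |\phi_u|^p t^{n-1-p-\beta}J(u,t)\,dt=-\frac1{n-p-\beta}\int_0^\infty t^{n-p-\beta}\Big(\big(|\phi_u|^p\big)'J(u,t)+|\phi_u|^p J'(u,t)\Big)\,dt .
\end{equation*}
Since $J'(u,t)\ge0$, the $J'$–term carries a favourable sign and may be discarded; using $\big|(|\phi_u|^p)'\big|\le p|\phi_u|^{p-1}|\phi_u'|$ we are left with an upper bound by $\frac{p}{n-p-\beta}\int_0^\infty t^{n-p-\beta}|\phi_u|^{p-1}|\phi_u'|\,J(u,t)\,dt$.

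Now I would apply H\"older's inequality with exponents $p'=p/(p-1)$ and $p$, splitting the powers of $t$ and the factor $J(u,t)$ so that the two H\"older factors reproduce exactly $|\phi_u|^p t^{n-1-p-\beta}J(u,t)$ and $|\phi_u'|^p t^{n-1-\beta}J(u,t)$ (a one–line bookkeeping check: the $t$–exponents add to $(n-1-p-\beta)\tfrac{p-1}p+(n-1-\beta)\tfrac1p=n-p-\beta$). Writing $A$ and $B$ for the two one–dimensional integrals, this yields $A\le\frac{p}{n-p-\beta}A^{1/p'}B^{1/p}$; since $\beta<n-p$ makes $A$ finite (the integrand is $O(t^{n-1-p-\beta})$ near $0$, hence integrable there), we may divide and raise to the $p$‑th power to get $A\le\big(\tfrac{p}{n-p-\beta}\big)^pB$, and integrating in $u$ over $S^{n-1}$ gives \eqref{eq:subcriticalH}. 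For the sharpness of $\big(\tfrac{p}{n-p-\beta}\big)^p$ I would test against radial functions concentrating at $P$: take $f_\varepsilon$ equal to $\rho^{-(n-p-\beta)/p+\varepsilon}$ on $B_\delta(P)$ and cut off smoothly to $0$ outside $B_{2\delta}(P)$. Using $J(u,t)=1+O(t^2)$, near the origin both integrands behave like $t^{-1+p\varepsilon}$ (the right one with the extra factor $(\tfrac{n-p-\beta}p-\varepsilon)^p$), whose integral over $(0,\delta)$ blows up as $\varepsilon\to0^+$, so the cutoff region and the $O(t^2)$ error in $J$ are negligible and the ratio of the two sides of \eqref{eq:subcriticalH} tends to $\big(\tfrac{p}{n-p-\beta}\big)^p$. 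The argument is essentially routine; the only points needing care are the vanishing of the boundary term at $t=0$ — precisely where the hypothesis $\beta<n-p$ enters — and, in the sharpness part, verifying that the truncation and the $J$–error do not perturb the limiting ratio, which is harmless because all the mass concentrates near $P$ where $J\approx1$.
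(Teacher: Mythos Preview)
Your proof is correct and shares the same skeleton as the paper's: pass to geodesic polar coordinates via \eqref{eq:polar}, integrate by parts in the radial variable (using $\beta<n-p$ to kill the boundary term at $t=0$), and discard the $J'$--contribution thanks to $J'\ge 0$, which follows from \eqref{eq:Bishop} with $b=0$. The one substantive difference is how you close the estimate after the integration by parts. You bound the cross term $\int|f|^{p-1}|\pa_\rho f|\,\rho^{1-p-\beta}\,dV_g$ by H\"older's inequality, which is the quickest route to \eqref{eq:subcriticalH}. The paper instead rewrites the cross term using the convexity defect $R_p(\xi,\eta)=\tfrac1p|\eta|^p+\tfrac{p-1}{p}|\xi|^p-|\xi|^{p-2}\langle\xi,\eta\rangle\ge 0$, obtaining an exact \emph{identity} \eqref{eq:identity} with two explicit nonnegative remainders (one from $R_p$, one from $J'/J$); this identity is what drives the quantitative improvements in Theorem~\ref{quanH}, something your H\"older step does not directly yield. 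For sharpness you perturb the critical exponent by $+\varepsilon$ and cut off at a fixed radius, whereas the paper keeps the exact power $\rho^{-(n-p-\beta)/p}$ and instead excises a shrinking ball around $P$; both choices are standard and work. One minor remark: the preliminary replacement of $f$ by $|f|$ is unnecessary, since for $f\in C_0^\infty$ and $p>1$ the function $|f|^p$ is $C^1$ with $\bigl|(|f|^p)'\bigr|\le p|f|^{p-1}|\pa_\rho f|$, so your integration by parts and H\"older steps apply directly to the original $f$.
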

By Gauss's lemma (see, e.g., \cite{GHL}), we have 
\[
|\pa_\rho f| \leq |\na_g f|_g,\qquad f\in C^1(M).
\]
Hence, the inequality \eqref{eq:subcriticalH} is stronger than the weighted Hardy inequality obtained in \cite{KO2009} (see also \cite{YSK})
\[
\int_M \frac{|f(x)|^p}{\rho(x)^{p+\beta}} dV_g \leq \lt(\frac p{n-p-\beta}\rt)^p \int_M \frac{|\na_g f(x)|_g^p}{\rho(x)^\beta} dV_g,\quad f\in C_0^\infty(M)
\]
for any $\beta < n-p$.

The inequality \eqref{eq:subcriticalH} was proved by Ioku et al. \cite{IIO2017} and by Machihara et al. \cite{MOW2016} when $M =\R^n$. Furthermore, they proved inequality \eqref{eq:subcriticalH} with an extra sharp remainder term. In the proof of the inequality \eqref{eq:subcriticalH} below, we also get the inequality \eqref{eq:subcriticalH} with extra remainder terms: one comes from the density function $J(u,t)$ and the other is as in the one of Ioku, Ishiwata and Ozawa \cite{IIO2017} and Machihara, Ozawa and Wadade \cite{MOW2016}. In order to prove Theorem \ref{Hardy}, let us introduce the quantity $R_p(\xi,\eta)$ for $p >1$ and $\xi, \eta \in T_PM$ by
\begin{equation}\label{eq:Remainder}
R_p(\xi, \eta) = \frac1p |\eta|^p + \frac{p-1}p |\xi|^p -|\xi|^{p-2} \la \xi, \eta\ra.
\end{equation}
This quantity closely relates to the remainder term in \eqref{eq:subcriticalH} (see, also, \cite{IIO2017,MOW2016}). By the convexity of $\xi \to |\xi|^p$ we see that $R_p(\xi,\eta) \geq 0$ with equality if and only if $\xi = \eta$. Furthermore, we can see that
\[
R_p(\xi, \eta) = (p-1)\int_0^1 |t\xi + (1-t)\eta|^{p-2} t dt |\xi -\eta|^2.
\]
\begin{proof}[Proof of Theorem \ref{Hardy}]
Let $f \in C_0^\infty(M)$, then the function
\[
F(y) = f(\text{\rm Exp}_P(y)) \in C_0^\infty(T_PM).
\]
Using the polar coordinate \eqref{eq:polar} and the assumption $\beta < n-p$, we get
\begin{align}\label{eq:applypolar1}
\int_M \frac{|f(x)|^p}{\rho(x)^{p+ \beta}} dV_g&=\int_{S^{n-1}} \int_0^\infty |F(\rho u)|^p J(u,\rho) \rho^{n-p-\beta -1} d\rho du\notag\\
&=\frac{1}{n-p-\beta} \int_{S^{n-1}} \int_0^\infty |F(\rho u)|^p J(u,\rho) d\rho^{n-p-\beta} du\notag\\
&=-\frac p{n-p-\beta} \int_{S^{n-1}}\int_0^\infty |F|^{p-2} F \pa_\rho F J(u,\rho) \rho^{n-p-\beta} d\rho du\notag\\
&\quad -\frac1{n-p-\beta} \int_{S^{n-1}} \int_0^\infty |F|^p J'(u,\rho) \rho^{n-p-\beta} d\rho du\notag\\
&=-\frac p{n-p-\beta} \int_M\frac{|f|^{p-2} f}{\rho(x)^{\frac{p-1}p(p+\beta)}} \frac{\pa_\rho f}{\rho(x)^{\frac\beta p}} dV_g\notag\\
&\quad -\frac1{n-p-\beta} \int_{S^{n-1}} \int_0^\infty |F|^p J'(u,\rho) \rho^{n-p-\beta} d\rho du.
\end{align}
Using the definition of $R_p$, we then have
\begin{align}\label{eq:identity1}
-\frac p{n-p-\beta} \int_M & \frac{|f|^{p-2} f}{\rho(x)^{\frac{p-1}p(p+\beta)}} \frac{\pa_\rho f}{\rho(x)^{\frac\beta p}} dV_g\notag\\
 &= \frac{p-1}p \int_M \frac{|f|^p}{\rho(x)^{p+ \beta}} dV_g + \frac1p\lt(\frac p{n-p-\beta}\rt)^p \int_M \frac{|\pa_\rho f|^p}{\rho(x)^\beta} dV_g\notag\\
&\quad - \int_M R_p\lt(\frac{f}{\rho(x)^{1+ \frac\beta p}}, -\frac p{n-p-\beta} \frac{\pa_\rho f}{\rho(x)^{\frac\beta p}}\rt) dV_g.
\end{align}
Combining \eqref{eq:applypolar1} and \eqref{eq:identity1} together, we get
\begin{align}\label{eq:identity}
\int_M \frac{|f(x)|^p}{\rho(x)^{p+ \beta}} dV_g& = \lt(\frac p{n-p-\beta}\rt)^p \int_M \frac{|\pa_\rho f|^p}{\rho(x)^\beta} dV_g - p\int_M \frac{R_p\lt(\frac{f}{\rho(x)}, -\frac p{n-p-\beta} \pa_\rho f\rt)}{\rho(x)^\beta} dV_g\notag\\
&\quad -\frac{p}{n-p-\beta} \int_M \frac{|f|^p}{\rho(x)^{p+\beta}} \frac{J'(u_x,\rho(x))\rho(x)}{J(u_x,\rho(x))} dV_g,
\end{align}
here for $x\in M\setminus\{P\}$, we denote by $u_x$ the unique unit vector in $T_PM$ such that $x =\text{\rm Exp}_P(\rho(x) u_x)$.

The inequality \eqref{eq:subcriticalH} is now an immediate consequence of \eqref{eq:identity} by dropping the nonnegative remainder terms. It remains to check the sharpness of constant. To do this, we approximate the function $\rho^{-(n-p-\beta)/p}$ as follows. Let $\phi \in C_0^\infty(\R)$ such that $0\leq \phi \leq 1$, $\phi(t) =1$ if $|t| \leq 1$ and $\phi(t) =0$ if $|t|\geq 2$. For $\ep >0$, define
\[
f_\ep(x) = \phi(\rho(x)) (1 -\phi(\ep^{-1} \rho(x))) \rho(x)^{-\frac{n-p-\beta}p}.
\]
A straightforward computation shows that
\begin{align*}
\int_M \frac{f_\ep(x)^p}{\rho(x)^{p+\beta}} dV_g &= \int_{S^{n-1}} \int_0^\infty \phi(t)^p (1-\phi(\ep^{-1}t))^p J(u,t) t^{-1} dt\\
&\geq \int_{S^{n-1}}\int_{2\ep}^{1}t^{-1} dt du \geq -|S^{n-1}| \ln(2\ep),
\end{align*}
here we use the increasing monotonicity of $J(u,t)$. Consequently
\[
\lim_{\ep\to 0^+}\int_M \frac{f_\ep(x)^p}{\rho(x)^{p+\beta}} dV_g = \infty.
\]
In the other hand, we have
\begin{align*}
\pa_\rho f_\ep &= \phi'(\rho(x))(1 -\phi(\ep^{-1} \rho(x))) \rho(x)^{-\frac{n-p-\beta}p} -\ep^{-1} \phi'(\ep^{-1}\rho(x)) \phi(\rho(x))\rho(x)^{-\frac{n-p-\beta}p}\\
&\quad -\frac{n-p-\beta}p \phi(\rho(x)) (1 -\phi(\ep^{-1} \rho(x))) \rho(x)^{-\frac{n-\beta}p}.
\end{align*}
Easy computations shows that
\[
\int_{M}\frac{|\phi'(\rho(x))(1 -\phi(\ep^{-1} \rho(x))) \rho(x)^{-\frac{n-p-\beta}p}|^p}{\rho^{\beta}} dV_g = O(1),
\]
\[
\int_{M} \frac{|\ep^{-1} \phi'(\ep^{-1}\rho(x)) \phi(\rho(x))\rho(x)^{-\frac{n-p-\beta}p}|^p}{\rho^\beta} dV_g = O(1),
\]
and 
\[
\int_M \frac{|\phi(\rho(x)) (1 -\phi(\ep^{-1} \rho(x))) \rho(x)^{-\frac{n-\beta}p}|^p}{\rho(x)^{\beta}} dx = \int_M \frac{f_\ep(x)^p}{\rho(x)^{p+\beta}} dV_g.
\]
These estimates prove the sharpness of \eqref{eq:subcriticalH}.
\end{proof}

It is worthy to note that the proof above of Theorem \ref{Hardy} also gives a quantitative Hardy type inequality on Cartan--Hadamard manifolds.
\begin{theorem}\label{quanH}
Suppose the assumptions in statement of Theorem \ref{Hardy} and suppose that $K_M \leq -b \leq 0$. Then for any $f\in C_0^\infty(M)$, we have
\begin{equation}\label{eq:quantitativeH}
\int_M \frac{|\pa_\rho f|^p}{\rho^\beta} dV_g\geq \lt(\frac{n-p-\beta}p\rt)^p \int_{M} \frac{|f|^p}{\rho^{p+ \beta}}\lt(1+ \frac{(n-1)p}{n-p-\beta}\bD_b(\rho)\rt) dV_g.
\end{equation}
Consequently, the following quantitative Hardy inequalities hold
\begin{equation}\label{eq:quantitativeHa}
\int_M \frac{|\na_g f|_g^p}{\rho^\beta} dV_g\geq \lt(\frac{n-p-\beta}p\rt)^p \int_{M} \frac{|f|^p}{\rho^{p+ \beta}}\lt(1+ \frac{(n-1)p}{n-p-\beta}\bD_b(\rho)\rt) dV_g,
\end{equation}
\begin{align}\label{eq:quantitativeHb1}
\int_M \frac{|\pa_\rho f|^p}{\rho^\beta} dV_g&\geq \lt(\frac{n-p-\beta}p\rt)^p \int_{M} \frac{|f|^p}{\rho^{p+ \beta}}dV_g\notag\\
&\quad + 3b(n-1) \lt(\frac{n-p-\beta}p\rt)^{p-1}\int_{M} \frac{|f|^p}{\rho^{p+ \beta-2}(\pi^2 + b\rho^2)} dV_g.
\end{align}
and
\begin{align}\label{eq:quantitativeHb}
\int_M \frac{|\na_g f|_g^p}{\rho^\beta} dV_g&\geq \lt(\frac{n-p-\beta}p\rt)^p \int_{M} \frac{|f|^p}{\rho^{p+ \beta}}dV_g\notag\\
&\quad + 3b(n-1) \lt(\frac{n-p-\beta}p\rt)^{p-1}\int_{M} \frac{|f|^p}{\rho^{p+ \beta-2}(\pi^2 + b\rho^2)} dV_g.
\end{align}
\end{theorem}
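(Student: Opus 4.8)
The plan is to read off both families from the identity \eqref{eq:identity} already obtained in the proof of Theorem \ref{Hardy}, this time \emph{keeping} the curvature term rather than discarding it. Rearranging \eqref{eq:identity} so that its two nonnegative pieces sit on the right, and then dropping \emph{only} the remainder $p\int_M \rho^{-\beta}R_p\big(\tfrac{f}{\rho},-\tfrac{p}{n-p-\beta}\pa_\rho f\big)\,dV_g$ (which is $\geq 0$ by the convexity remark after \eqref{eq:Remainder}), yields
\[
\left(\frac{p}{n-p-\beta}\right)^{p}\int_M \frac{|\pa_\rho f|^p}{\rho^\beta}\,dV_g \;\geq\; \int_M \frac{|f|^p}{\rho^{p+\beta}}\,dV_g \;+\; \frac{p}{n-p-\beta}\int_M \frac{|f|^p}{\rho^{p+\beta}}\,\frac{J'(u_x,\rho)\,\rho}{J(u_x,\rho)}\,dV_g .
\]
On a Cartan--Hadamard manifold $J'/J\geq 0$ (noted just before Theorem \ref{Hardy}), so the discarded term and the retained term are both genuinely nonnegative, and the manipulation is legitimate even when $\beta<0$.

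Next I apply the Bishop--Gunther comparison \eqref{eq:Bishop}: if $K_M\leq -b$ then $\frac{J'(u_x,\rho)}{J(u_x,\rho)}\geq \frac{n-1}{\rho}\bD_b(\rho)$, hence $\frac{J'(u_x,\rho)\rho}{J(u_x,\rho)}\geq (n-1)\bD_b(\rho)$. Substituting, multiplying through by $\big(\tfrac{n-p-\beta}{p}\big)^{p}$, and using $\big(\tfrac{n-p-\beta}{p}\big)^{p}\cdot\tfrac{(n-1)p}{n-p-\beta}=(n-1)\big(\tfrac{n-p-\beta}{p}\big)^{p-1}$ gives \eqref{eq:quantitativeH} (for $b=0$ it collapses back to Theorem \ref{Hardy} since $\bD_0\equiv 0$). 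Inequality \eqref{eq:quantitativeHa} is then immediate from Gauss's lemma $|\pa_\rho f|\leq |\na_g f|_g$, which only enlarges the left-hand side.

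For \eqref{eq:quantitativeHb1} the one extra ingredient is the pointwise bound $\bD_b(\rho)\geq \frac{3b\rho^2}{\pi^2+b\rho^2}$, i.e.\ (with $s=\sqrt b\,\rho$) $s\coth s-1\geq \frac{3s^2}{s^2+\pi^2}$ for all $s>0$; this is where the numbers $3$ and $\pi^2$ enter and it is the only genuinely analytic point. I would obtain it from the partial-fraction expansion $\coth s=\frac1s+\sum_{k\geq1}\frac{2s}{s^2+k^2\pi^2}$, which gives $s\coth s-1=\sum_{k\geq1}\frac{2s^2}{s^2+k^2\pi^2}$ and therefore $\frac{s^2+\pi^2}{s^2}\,(s\coth s-1)=2+\sum_{k\geq2}\frac{2(s^2+\pi^2)}{s^2+k^2\pi^2}$; each summand with $k\geq2$ is nondecreasing in $s$, so the right side is at least its value at $s=0$, namely $2+2\sum_{k\geq2}k^{-2}=\pi^2/3>3$, which is the claim. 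Feeding $\bD_b(\rho)\geq \frac{3b\rho^2}{\pi^2+b\rho^2}$ into \eqref{eq:quantitativeH} and absorbing the $\rho^2$ into the denominator produces \eqref{eq:quantitativeHb1}, and a last use of Gauss's lemma gives \eqref{eq:quantitativeHb}. The main obstacle is bookkeeping only — tracking the powers of $\tfrac{n-p-\beta}{p}$ through the rearrangement and checking the $\coth$ bound with the stated constants — since the substantive content is already packaged in \eqref{eq:identity}.
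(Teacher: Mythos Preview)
Your proof is correct and follows essentially the same route as the paper: start from the identity \eqref{eq:identity}, drop only the $R_p$ remainder, invoke the Bishop--Gunther comparison \eqref{eq:Bishop} for \eqref{eq:quantitativeH}, apply Gauss's lemma for \eqref{eq:quantitativeHa} and \eqref{eq:quantitativeHb}, and use the elementary bound $t\coth t-1\geq \frac{3t^2}{\pi^2+t^2}$ for \eqref{eq:quantitativeHb1}. The only difference is that the paper cites \cite{K2} for this last inequality, while you supply a self-contained proof via the Mittag--Leffler expansion of $\coth$; your argument there is correct (and indeed yields the slightly stronger constant $\pi^2/3$ in place of $3$).
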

The inequalities \eqref{eq:quantitativeHa} and \eqref{eq:quantitativeHb} was proved by Krist\'aly for $p=2$ and $\beta =0$ in \cite[Theorems $4.1$ and $4.2$]{K2} on Cartan--Hadamard manifolds. \eqref{eq:quantitativeHa} was extended to Finsler--Hadamard manifolds again for $p=2$ by Krist\'aly and Repov$\check{\text{\rm s}}$ in \cite[Lemma $3.1$]{K1} (see also \cite{FKV} for a particular form). Obviously, \eqref{eq:quantitativeHa} and \eqref{eq:quantitativeHb} provide the improvements of the Hardy inequality on Cartan--Hadamard manifolds due to Yang, Su and Kong \cite[Theorem $3.1$]{YSK} if $b >0$.
\begin{proof}
The inequality \eqref{eq:quantitativeH} is followed from \eqref{eq:identity} and \eqref{eq:Bishop}. The inequality \eqref{eq:quantitativeHa} is consequence of \eqref{eq:quantitativeH} and Gauss lemma. The inequality \eqref{eq:quantitativeHb1} is derived from \eqref{eq:quantitativeH} and the simple fact (see the proof of Theorem $1.4$ in \cite{K2})
\[
t \coth (t) -1 \geq \frac{3t^2}{\pi^2 + t^2}, \quad t>0.
\]
The inequality \eqref{eq:quantitativeHb} is followed from \eqref{eq:quantitativeHb1} and Gauss lemma (or from \eqref{eq:quantitativeHa} and the simple fact above).
\end{proof}

We next consider the critical case $\beta =n-p$. In this case, the inequality \eqref{eq:subcriticalH} fails for any constant. In order to establish the inequalities in this case, we need add an extra logarithmic term. The next result of this section reads as follows.  

\begin{theorem}\label{CH}
Let $n\geq 2$ and let $M$ be an $n$-dimensional Cartan--Hadamard manifold. Then, for any $p\in (1,\infty)$, there holds
\begin{equation}\label{eq:criticalH}
\lt(\frac{p-1}p\rt)^p \int_{B_1(P)} \frac{|f|^p}{\rho(x)^n \lt(\ln \frac1{\rho(x)}\rt)^p} dV_g \leq \int_{B_1(P)} \frac{|\pa_\rho f|^p}{\rho(x)^{n-p}} dV_g,
\end{equation}
for any function $f \in C_0^\infty(B_1(P))$. Furthermore, the constant $(\frac{p-1}p)^p$ is the best constant in \eqref{eq:criticalH}.
\end{theorem}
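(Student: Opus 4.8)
The approach is to follow the scheme of the proof of Theorem \ref{Hardy}: pass to geodesic polar coordinates centred at $P$ through \eqref{eq:polar}, reduce \eqref{eq:criticalH} to a one--dimensional weighted inequality on $(0,1)$ valid for each fixed direction $u\in S^{n-1}$, prove that one--dimensional inequality by an integration by parts followed by H\"older's inequality, and then integrate over $u$. Writing $F=f\circ\text{Exp}_P$, which is smooth and compactly supported in the open unit ball of $T_PM$, and using $\partial_\rho f(\text{Exp}_P(tu))=\partial_tF(tu)$, formula \eqref{eq:polar} turns the left--hand side of \eqref{eq:criticalH} into $\int_{S^{n-1}}\int_0^1|F(tu)|^p\,\frac{J(u,t)}{t(\ln(1/t))^p}\,dt\,du$ and the right--hand side into $\int_{S^{n-1}}\int_0^1|\partial_tF(tu)|^p\,J(u,t)\,t^{p-1}\,dt\,du$, so it suffices to show, for each $u\in S^{n-1}$ and each $g(t)=F(tu)$,
\[
\Big(\frac{p-1}{p}\Big)^p\int_0^1|g(t)|^p\,\frac{J(u,t)}{t(\ln(1/t))^p}\,dt\ \le\ \int_0^1|g'(t)|^p\,J(u,t)\,t^{p-1}\,dt .
\]

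For this one--dimensional inequality I would use the identity $\frac{1}{t(\ln(1/t))^p}=\frac1{p-1}\frac{d}{dt}\big[(\ln(1/t))^{-(p-1)}\big]$ and integrate by parts. The boundary contribution at $t=1$ vanishes because $g$ is supported in $[0,1)$, and the one at $t=0$ vanishes because $(\ln(1/t))^{-(p-1)}\to0$ there while $|g|^pJ(u,t)$ stays bounded; crucially, $t\mapsto J(u,t)$ is nondecreasing on $(0,\infty)$ (this follows from \eqref{eq:Bishop} with $b=0$, and is recorded at the start of this section), so the term carrying $J'(u,t)$ produced by the integration by parts has a favourable sign and may simply be dropped. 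One is left with $\int_0^1|g|^pJ(u,t)\frac{dt}{t(\ln(1/t))^p}\le -\frac{p}{p-1}\int_0^1|g|^{p-2}g\,g'\,J(u,t)(\ln(1/t))^{-(p-1)}\,dt$, and applying H\"older's inequality with exponents $p'=p/(p-1)$ and $p$ --- with $g'\,J(u,t)^{1/p}\,t^{(p-1)/p}$ as the second factor, so that the first factor raised to the power $p'$ is exactly $\frac{|g|^pJ(u,t)}{t(\ln(1/t))^p}$ --- yields $I_u\le \frac{p}{p-1}\,I_u^{1/p'}E_u^{1/p}$, where $I_u,E_u$ are the two integrals in the display above. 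Since $p>1$ ensures $I_u<\infty$, this rearranges to $(\frac{p-1}{p})^pI_u\le E_u$, and integrating over $u\in S^{n-1}$ gives \eqref{eq:criticalH}. (Exactly as in Theorem \ref{Hardy}, one may instead invoke $R_p\big(g/(t^{1/p}\ln(1/t)),\,-p'g't^{(p-1)/p}\big)\ge 0$, which produces \eqref{eq:criticalH} together with an explicit nonnegative remainder term.)

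The sharpness of $(\frac{p-1}{p})^p$ is the delicate point, and I expect it to be the main obstacle. The formal extremiser of the one--dimensional problem is $g_*(t)=(\ln(1/t))^{(p-1)/p}$, for which both integrals equal, up to the constant, $\int_0^1\frac{dt}{t\ln(1/t)}=+\infty$, so it must be truncated carefully. The truncation used for Theorem \ref{Hardy} (cutting off near $\rho=\ep$ by multiplying by $1-\phi(\ep^{-1}\rho)$) fails here, because the resulting cut--off term injects an error of order $(\ln(1/\ep))^{p-1}$ into the right--hand side, whereas the leading terms only grow like $\ln\ln(1/\ep)$. Instead I would take the radial test function $f_\ep(x)=\phi_1(\rho(x))\,\min\big\{(\ln(1/\rho(x)))^{(p-1)/p},\,(\ln(1/\ep))^{(p-1)/p}\big\}$, where $\phi_1$ is a fixed cut--off equal to $1$ on $[0,1/2]$ and supported in $[0,3/4]$ (so $f_\ep$ is Lipschitz with compact support in $B_1(P)$, and \eqref{eq:criticalH} extends to such functions by density). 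On $\{\rho<\ep\}$ the function is constant, so it contributes nothing to the right--hand side and only $O(1)$ to the left--hand side (using $\int_0^\ep\frac{dt}{t(\ln(1/t))^p}=\frac{(\ln(1/\ep))^{1-p}}{p-1}$ together with $J(u,t)=1+O(t^2)$ near $0$); on $\{\ep<\rho<1/2\}$ the right--hand integrand equals precisely $(\frac{p-1}{p})^p$ times the left--hand integrand, and both equal $\int_{S^{n-1}}\int_\ep^{1/2}\frac{J(u,t)}{t\ln(1/t)}\,dt\,du=|S^{n-1}|\ln\ln(1/\ep)(1+o(1))$, since $\frac{J(u,t)-1}{t\ln(1/t)}=O\big(\frac{t}{\ln(1/t)}\big)$ near $0$ by \eqref{eq:density} and is therefore integrable, contributing only $O(1)$; the region $\{1/2<\rho<3/4\}$ adds $O(1)$ to both sides. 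Hence the ratio of the right-- to the left--hand side of \eqref{eq:criticalH} evaluated at $f_\ep$ tends to $(\frac{p-1}{p})^p$ as $\ep\to0^+$, which proves optimality. The heart of the matter is thus this truncation: one must arrange that both the truncation error and the curvature correction $J(u,t)-1$ stay negligible against the $\log$--$\log$ growth of the leading terms.
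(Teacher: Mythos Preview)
Your argument for the inequality \eqref{eq:criticalH} is correct and matches the paper's proof essentially line for line: both use polar coordinates \eqref{eq:polar}, the primitive $\frac{1}{p-1}(\ln\frac1t)^{1-p}$, integration by parts, and then discard the $J'$ contribution using $J'(u,t)\ge 0$. Your use of H\"older's inequality is equivalent to the paper's use of $R_p\ge 0$; the paper records the resulting identity \eqref{eq:identitycritical} explicitly because the $J'/J$ remainder is reused later for the quantitative version (Theorem \ref{QuanCH}), but for the inequality itself the two presentations are interchangeable.

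The sharpness argument is where you and the paper genuinely diverge. You cap the formal extremiser at height $(\ln\frac1\ep)^{(p-1)/p}$, obtaining a Lipschitz radial function constant on $\{\rho<\ep\}$; you then show both sides grow like $|S^{n-1}|\ln\ln\frac1\ep$ while all corrections (the inner plateau, the outer cutoff, and the curvature term $J-1$) contribute $O(1)$. This is correct, and your observation that the Theorem \ref{Hardy}--style cutoff $1-\phi(\ep^{-1}\rho)$ would fail here (the derivative of the cutoff produces a term of size $(\ln\frac1\ep)^{p-1}$, swamping the $\ln\ln$ main term) is exactly right. The paper instead keeps smooth test functions by perturbing the exponent: it sets
\[
f_\de(x)=\lt(\ln\tfrac{1}{\rho(x)}\rt)^{\frac{p-1}{p}-\de}\vphi(\rho(x)),
\]
so that both sides are finite, scale like $\frac{1}{p\de}(\ln 2)^{-p\de}$, and have ratio tending to $\big(\tfrac{p-1}{p}\big)^p$ as $\de\to 0^+$. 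The exponent--shift avoids any density argument and keeps the test functions in $C_0^\infty(B_1(P))$; your level--truncation is more hands--on but requires extending \eqref{eq:criticalH} to Lipschitz functions (which is routine). Either route establishes optimality.
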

\begin{proof}
Suppose that $f \in C_0^\infty(B_1(P))$ then
\[
F(y) = f(\text{\rm Exp}_P(y)) \in C_0^\infty(B_1),
\]
where $B_1$ denotes the unit ball in $T_PM$ with center at origin. Applying polar coordinate \eqref{eq:polar}, we have
\begin{align}\label{eq:applypolar2}
\int_{B_1(P)} \frac{|f|^p}{\rho(x)^n \lt(\ln \frac1{\rho(x)}\rt)^p} dV_g&= \int_{S^{n-1}} \int_0^1 |F(\rho u)|^p J(u,\rho) \rho^{-1} \lt(-\ln \rho\rt)^{-p} d\rho du\notag\\
&=\frac1{p-1} \int_{S^{n-1}}\int_0^1 |F(\rho u)|^p J(u,\rho) d\lt(-\ln \rho\rt)^{1-p} du \notag\\
&=-\frac{p}{p-1}\int_{S^{n-1}} \int_0^1 \frac{|F|^{p-2} F \pa_\rho F J(u,\rho)}{ \lt(-\ln \rho\rt)^{p-1}} d\rho du\notag\\
&\quad -\frac1{p-1}\int_{S^{n-1}} \int_0^1 |F(\rho u)|^p J'(u,\rho)\lt(-\ln \rho\rt)^{1-p} d\rho du\notag\\
&=-\frac{p}{p-1} \int_{B_1(P)} \frac{|f|^{p-2}f}{\rho(x)^{n\frac{p-1}p}\lt(-\ln \rho\rt)^{p-1}} \frac{\pa_\rho f}{\rho(x)^{\frac np-1}} dV_g\notag\\
&\quad -\frac1{p-1}\int_{B_1(P)} \frac{|f|^p}{\rho^n \lt(\ln \frac1{\rho}\rt)^p} \frac{J'(u_x,\rho)}{J(u_x,\rho)} \rho \ln \frac1{\rho} dV_g.
\end{align}
From the definition of $R_p$, we get
\begin{align}\label{eq:identity2}
-\frac{p}{p-1} &\int_{B_1(P)} \frac{|f|^{p-2}f}{\rho(x)^{n\frac{p-1}p}\lt(-\ln \rho\rt)^{p-1}} \frac{\pa_\rho f}{\rho(x)^{\frac np-1}} dV_g\notag\\
&=\frac{p-1}p\int_{B_1(P)} \frac{|f|^p}{\rho(x)^n \lt(\ln \frac1{\rho(x)}\rt)^p} dV_g + \frac1p \lt(\frac p{p-1}\rt)^p\int_{B_1(P)} \frac{|\pa_\rho f|^p}{\rho(x)^{n-p}} dV_g\notag\\
&\quad -\int_{B_1(P)}R_p\lt(\frac{f}{\rho(x)^{\frac np} (-\ln \rho(x))},-\frac  p{p-1} \frac{\pa_\rho f}{\rho(x)^{\frac np -1}} \rt)dV_g.
\end{align}
Combining \eqref{eq:applypolar2} and \eqref{eq:identity2} together implies
\begin{align}\label{eq:identitycritical}
\int_{B_1(P)} \frac{|f|^p}{\rho^n \lt(\ln \frac1{\rho}\rt)^p} dV_g &=\lt(\frac p{p-1}\rt)^p\int_{B_1(P)} \frac{|\pa_\rho f|^p}{\rho^{n-p}} dV_g -p \int_{B_1(P)} \frac{R_p\lt(\frac{f}{\rho \ln \frac1{\rho}},-\frac  p{p-1} \pa_\rho f \rt)}{\rho^{n-p}} dV_g\notag\\
&\quad -\frac p{p-1}\int_{B_1(P)} \frac{|f|^p}{\rho^n \lt(\ln \frac1{\rho}\rt)^p} \frac{J'(u_x,\rho)}{J(u_x,\rho)} \rho \ln \frac1{\rho} dV_g.
\end{align}
The inequality \eqref{eq:criticalH} is now followed from \eqref{eq:identitycritical} by dropping the nonnegative remainder terms. It remains to check the sharpness of \eqref{eq:criticalH}. Let $\vphi$ be a cut-off function in $(-1,1)$, i.e., $\vphi \in C_0^\infty((-1,1))$ such that $0\leq \vphi \leq 1$, $\vphi(t) =1$ if $|t| \leq 1/2$. For $\de >0$ small enough, define 
\[
f_\de(x) = \lt(\ln \frac1{\rho(x)}\rt)^{\frac{p-1}p -\de} \vphi(\rho(x)).
\]
Firstly, it follows from \eqref{eq:polar} and the increasing monotonicity of $J(u,\rho)$ that
\begin{align*}
\int_{B_1(P)} \frac{|f_\de|^p}{\rho^n \lt(\ln \frac1{\rho}\rt)^p} dV_g&= \int_{S^{n-1}}\int_0^1\lt(\ln \frac1{\rho(x)}\rt)^{-1-p\de} \frac{\vphi(\rho)^p}{\rho} J(u,\rho) d\rho du\\
&\geq |S^{n-1}|\int_0^{1/2} \lt(\ln \frac1{\rho(x)}\rt)^{-1-p\de} \frac1{\rho} d\rho\\
&=|S^{n-1}| \frac1{p\de} \lt(\ln 2\rt)^{-p\de}.
\end{align*}
We thus have
\[
\lim_{\de \to 0^+}\int_{B_1(P)} \frac{|f_\de|^p}{\rho^n \lt(\ln \frac1{\rho}\rt)^p} dV_g = \infty.
\]
In the other hand, the straightforward computations show that
\[
\pa_\rho f_\de = -\frac{p-1-p\de}p\lt(\ln \frac1{\rho}\rt)^{-\frac{1+p\de}p} \vphi(\rho) + \lt(\ln \frac1{\rho}\rt)^{\frac{p -1-p\de}p} \vphi'(\rho),
\]
\[
\int_{B_1(P)} \frac{|\frac{p-1-p\de}p\lt(\ln \frac1{\rho}\rt)^{-\frac{1+p\de}p} \vphi(\rho)|^p}{\rho^{n-p}} dV_g = \lt(\frac{p-1-p\de}p\rt)^p \int_{B_1(P)} \frac{|f_\de|^p}{\rho^n \lt(\ln \frac1{\rho}\rt)^p} dV_g,
\]
and
\[
\int_{B_1(P)} \frac{\lt|\lt(\ln \frac1{\rho}\rt)^{\frac{p -1-p\de}p} \vphi'(\rho)\rt|^p}{\rho^{n-p}} dV_g = O(1).
\]
Consequently, we obtain
\[
\lim_{\de\to 0^+} \frac{\int_{B_1(P)} \frac{|\pa_\rho f_\de|^p}{\rho^{n-p}} dV_g}{\int_{B_1(P)} \frac{|f_\de|^p}{\rho^n \lt(\ln \frac1{\rho}\rt)^p} dV_g} = \lt(\frac{p-1}p\rt)^p.
\]
This proves the sharpness of \eqref{eq:criticalH}.
\end{proof}
Theorem \ref{CH} together with Gauss's lemma yields the following critical Hardy type inequalities for full gradient on $M$,
\begin{equation}\label{eq:criticalHfull}
\lt(\frac{p-1}p\rt)^p \int_{B_1(P)} \frac{|f|^p}{\rho(x)^n \lt(\ln \frac1{\rho(x)}\rt)^p} dV_g \leq \int_{B_1(P)} \frac{|\na_g f|_g^p}{\rho(x)^{n-p}} dV_g,
\end{equation}
for any function $f\in C_0^\infty(B_1(P))$. Using again the test functions in the proof of Theorem \ref{CH}, we see that the constant $(p-1)^p/p^p$ in \eqref{eq:criticalHfull} is sharp.

Similar to the subcritical case, we also obtain from the proof of Theorem \ref{CH} the following quantitative critical Hardy inequalities whose proof is completely similar with the one of Theorem \ref{quanH}.
\begin{theorem}\label{QuanCH}
Suppose the assumptions in statement of Theorem \ref{CH} and suppose $K_M \leq -b \leq 0$. Then the following inequalities hold for any function $f \in C_0^\infty(B_1(P))$
\begin{equation}\label{eq:QuantiCH}
\int_{B_1(P)} \frac{|\pa_\rho f|^p}{\rho^{n-p}} dV_g \geq \lt(\frac{p-1}p\rt)^p \int_{B_1(P)}\frac{|f|^p}{\rho^n (\ln \frac1\rho)^p}\lt(1+ \frac{(n-1)p}{p-1} \bD_b(\rho) \ln \frac1\rho \rt) dV_g,
\end{equation}
\begin{equation}\label{eq:QuantiCHa}
\int_{B_1(P)} \frac{|\na_g f|_g^p}{\rho^{n-p}} dV_g \geq \lt(\frac{p-1}p\rt)^p \int_{B_1(P)}\frac{|f|^p}{\rho^n (\ln \frac1\rho)^p}\lt(1+ \frac{(n-1)p}{p-1} \bD_b(\rho) \ln \frac1\rho \rt) dV_g,
\end{equation}
\begin{align}\label{eq:QuantiCHb1}
\int_{B_1(P)} \frac{|\pa_\rho f|^p}{\rho^{n-p}} dV_g &\geq \lt(\frac{p-1}p\rt)^p \int_{B_1(P)}\frac{|f|^p}{\rho^n (\ln \frac1\rho)^p} dV_g\notag\\
&\quad +3b(n-1) \lt(\frac{p-1}p\rt)^{p-1} \int_{B_1(P)}\frac{|f|^p}{\rho^{n-2} (\ln \frac1\rho)^{p-1}(\pi^2 + b\rho^2)}dV_g,
\end{align}
and
\begin{align}\label{eq:QuantiCHb}
\int_{B_1(P)} \frac{|\na_g f|_g^p}{\rho^{n-p}} dV_g &\geq \lt(\frac{p-1}p\rt)^p \int_{B_1(P)}\frac{|f|^p}{\rho^n (\ln \frac1\rho)^p} dV_g\notag\\
&\quad +3b(n-1) \lt(\frac{p-1}p\rt)^{p-1} \int_{B_1(P)}\frac{|f|^p}{\rho^{n-2} (\ln \frac1\rho)^{p-1}(\pi^2 + b\rho^2)}dV_g.
\end{align}
\end{theorem}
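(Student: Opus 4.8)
The plan is to read all four inequalities straight off the identity \eqref{eq:identitycritical} established in the proof of Theorem \ref{CH}, in exactly the way Theorem \ref{quanH} was extracted from \eqref{eq:identity}. First I would discard from \eqref{eq:identitycritical} the nonnegative remainder term $p\int_{B_1(P)} R_p(\cdot,\cdot)/\rho^{n-p}\,dV_g$ (recall $R_p(\xi,\eta)\geq 0$ by convexity, as noted before the proof of Theorem \ref{Hardy}) while keeping the curvature term, which gives
\[
\lt(\frac{p}{p-1}\rt)^p \int_{B_1(P)} \frac{|\pa_\rho f|^p}{\rho^{n-p}}\,dV_g \geq \int_{B_1(P)} \frac{|f|^p}{\rho^n (\ln\frac1\rho)^p}\,dV_g + \frac{p}{p-1}\int_{B_1(P)} \frac{|f|^p}{\rho^n (\ln\frac1\rho)^p}\,\frac{J'(u_x,\rho)}{J(u_x,\rho)}\,\rho\ln\frac1\rho\,dV_g.
\]
Since $f\in C_0^\infty(B_1(P))$, the weight $\rho\ln\frac1\rho$ is nonnegative on $\mathrm{supp}\,f$ and $J'/J\geq 0$, so the Bishop--Gunther comparison \eqref{eq:Bishop}, namely $J'(u,\rho)/J(u,\rho)\geq (n-1)\bD_b(\rho)/\rho$, can be inserted termwise. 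Collecting the powers of $\rho$ and $\ln\frac1\rho$ in the last integral and multiplying through by $((p-1)/p)^p$ yields \eqref{eq:QuantiCH}; then \eqref{eq:QuantiCHa} is immediate from Gauss's lemma $|\pa_\rho f|\leq |\na_g f|_g$.

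For the second pair I would start from \eqref{eq:QuantiCH}, rewrite the curvature contribution on its right-hand side as $(n-1)\lt(\frac{p-1}p\rt)^{p-1}\int_{B_1(P)} \frac{|f|^p\,\bD_b(\rho)}{\rho^n(\ln\frac1\rho)^{p-1}}\,dV_g$, and then invoke the elementary estimate $t\coth(t)-1\geq 3t^2/(\pi^2+t^2)$ for $t>0$ (the same ``simple fact'' used in the proof of Theorem \ref{quanH}, cf. \cite{K2}) with $t=\sqrt b\,\rho$; this reads $\bD_b(\rho)\geq 3b\rho^2/(\pi^2+b\rho^2)$. Substituting and simplifying the power of $\rho$ gives \eqref{eq:QuantiCHb1}, and \eqref{eq:QuantiCHb} follows from it via Gauss's lemma (equivalently, from \eqref{eq:QuantiCHa} and the same elementary estimate).

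I do not expect a genuine obstacle here: the whole content is already encoded in the identity \eqref{eq:identitycritical}, and the only points requiring a word of care are that the discarded $R_p$-term is nonnegative and that the factor $\rho\ln\frac1\rho$ multiplying the curvature term is $\geq 0$ on the support of $f$ — both of which were established in the proofs of Theorems \ref{Hardy} and \ref{CH}. Hence the argument is a short corollary rather than an independent proof.
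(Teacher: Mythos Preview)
Your proposal is correct and follows exactly the approach the paper indicates: the paper states only that the proof is ``completely similar with the one of Theorem~\ref{quanH}'', and what you have written is precisely that argument spelled out---drop the nonnegative $R_p$-term from the identity \eqref{eq:identitycritical}, apply the Bishop--Gunther bound \eqref{eq:Bishop} (using $\rho\ln\frac1\rho\geq 0$ on $B_1(P)$), then Gauss's lemma and the elementary estimate $t\coth t-1\geq 3t^2/(\pi^2+t^2)$.
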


Especially, in the case $p=n$, we obtain from Theorems \ref{CH} and \ref{QuanCH} the following critical Hardy inequalities and their quantitative versions.
\begin{corollary}\label{eq:Critical}
Suppose the assumptions in the statement of Theorem \ref{CH}. Then, there holds for any function $f \in C_0^\infty(B_1(P))$
\begin{equation}\label{eq:criticalHH}
\lt(\frac{n-1}n\rt)^n \int_{B_1(P)} \frac{|f|^n}{\rho(x)^n \lt(\ln \frac1{\rho(x)}\rt)^n} dV_g \leq \int_{B_1(P)} |\pa_\rho f|^n dV_g,
\end{equation}
and
\begin{equation}\label{eq:criticalHHfull}
\lt(\frac{n-1}n\rt)^n \int_{B_1(P)} \frac{|f|^n}{\rho(x)^n \lt(\ln \frac1{\rho(x)}\rt)^n} dV_g \leq \int_{B_1(P)} |\na_g f|_g^n dV_g.
\end{equation}
Furthermore, the constant $(\frac{n-1}n)^n$ is the best constant in \eqref{eq:criticalHH} and \eqref{eq:criticalHHfull}.

Suppose, in addition, $K_M \leq -b \leq 0$, then we have
\begin{align}\label{eq:QuantiCHb1n}
\int_{B_1(P)} |\pa_\rho f|^n dV_g &\geq \lt(\frac{n-1}n\rt)^n \int_{B_1(P)}\frac{|f|^n}{\rho^n (\ln \frac1\rho)^n} dV_g\notag\\
&\quad +3b\frac{(n-1)^n}{n^{n-1}} \int_{B_1(P)}\frac{|f|^n}{\rho^{n-2} (\ln \frac1\rho)^{n-1}(\pi^2 + b\rho^2)}dV_g,
\end{align}
and
\begin{align}\label{eq:QuantiCHn}
\int_{B_1(P)} |\na_g f|_g^n dV_g &\geq \lt(\frac{n-1}n\rt)^n \int_{B_1(P)}\frac{|f|^n}{\rho^n (\ln \frac1\rho)^n} dV_g\notag\\
&\quad +3b \frac{(n-1)^n}{n^{n-1}} \int_{B_1(P)}\frac{|f|^n}{\rho^{n-2} (\ln \frac1\rho)^{n-1}(\pi^2 + b\rho^2)}dV_g
\end{align}
\end{corollary}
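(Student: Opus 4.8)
The plan is to simply specialize the previously proved results to $p=n$. Indeed, inequalities \eqref{eq:criticalHH} and \eqref{eq:criticalHHfull} are nothing but \eqref{eq:criticalH} and \eqref{eq:criticalHfull} with the choice $p=n$: in that case $n-p=0$, so the weight $\rho(x)^{n-p}$ on the right-hand side disappears, and $(p-1)^p/p^p$ becomes $(n-1)^n/n^n$. Likewise, the quantitative bounds \eqref{eq:QuantiCHb1n} and \eqref{eq:QuantiCHn} are exactly \eqref{eq:QuantiCHb1} and \eqref{eq:QuantiCHb} evaluated at $p=n$; here one only has to observe that $(\frac{p-1}{p})^{p-1} = (\frac{n-1}{n})^{n-1} = (n-1)^n/n^{n-1} \cdot \frac{1}{n-1}\cdot n \cdot \frac{1}{n}$, i.e., that $3b(n-1)\big(\frac{n-1}{n}\big)^{n-1} = 3b\,\frac{(n-1)^n}{n^{n-1}}$, which is an elementary rewriting of the constant. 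No new computation beyond this bookkeeping is needed.

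First I would invoke Theorem \ref{CH} with $p=n$ to obtain \eqref{eq:criticalHH}, and then combine it with Gauss's lemma $|\pa_\rho f|\le |\na_g f|_g$ (equivalently, cite \eqref{eq:criticalHfull} at $p=n$) to get \eqref{eq:criticalHHfull}. Next I would record the sharpness of the constant $(\frac{n-1}{n})^n$: this follows from the sharpness part of Theorem \ref{CH}, since the family of test functions $f_\de(x) = (\ln\frac{1}{\rho(x)})^{\frac{n-1}{n}-\de}\vphi(\rho(x))$ used there produces ratios of the two integrals tending to $(\frac{n-1}{n})^n$; because $|\pa_\rho f_\de|\le|\na_g f_\de|_g$ and the upper estimate for $\int |\pa_\rho f_\de|^n$ is matched from below by the full-gradient version through the same asymptotics, the constant is sharp in both \eqref{eq:criticalHH} and \eqref{eq:criticalHHfull}.

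Finally, for the quantitative estimates I would apply Theorem \ref{QuanCH} with $p=n$ under the hypothesis $K_M\le -b\le 0$, taking \eqref{eq:QuantiCHb1} and \eqref{eq:QuantiCHb} and substituting the constants as indicated above to arrive at \eqref{eq:QuantiCHb1n} and \eqref{eq:QuantiCHn}. Since every ingredient is already established, there is no genuine obstacle here; the only point requiring minimal care is the arithmetic simplification of the coefficient $3b(n-1)\big(\frac{n-1}{n}\big)^{n-1}$ and the verification that the weight exponents $n-p$ and $n-p+2$ collapse correctly to $0$ and $2$ when $p=n$, so that the displayed forms in the corollary match verbatim the $p=n$ instances of the theorems.
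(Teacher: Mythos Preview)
Your proposal is correct and matches the paper's approach exactly: the corollary is stated in the paper immediately after Theorem~\ref{QuanCH} with the remark that it is obtained ``in the case $p=n$'' from Theorems~\ref{CH} and~\ref{QuanCH}, with no further argument given. Your bookkeeping of the constant $3b(n-1)\big(\tfrac{n-1}{n}\big)^{n-1}=3b\,\tfrac{(n-1)^n}{n^{n-1}}$ and the observation that the radial test functions $f_\de$ also witness sharpness for the full-gradient inequality \eqref{eq:criticalHHfull} (since $|\pa_\rho f_\de|=|\na_g f_\de|_g$ for radial functions) are precisely what the paper uses implicitly.
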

In Euclidean space (i.e., $M=\R^n$), the inequality \eqref{eq:criticalHH} was proved by Ioku and Ishiwata \cite{II2015}, and then was extended for any $p>1$ by Ruzhansky and Suragan \cite{RS2016} (i.e, the inequality \eqref{eq:criticalH} in the $\R^n$). More plus, Ruzhansky and Suragan was generalized the inequality \eqref{eq:criticalH} to any homogeneous groups and any homogeneous quasi-norm with the same best constant.

\section{The Rellich type inequalities}
In this section, we study the Rellich type inequalities on Cartan--Hadamard manifolds $(M,g)$. The following result will play an important role in our analysis below.

\begin{lemma}\label{onetwolemma}
Let $(M,g)$ be an $n-$dimensional Cartan--Hadamard manifold. Suppose that $n \geq 2$, $p\in (1,n)$ and $-n(p-1)< \beta < n-p$. There holds for any $f \in C_0^\infty(M)$
\begin{equation}\label{eq:onetwoinequality}
\int_M \frac{|f|^p}{\rho(x)^{p+\beta}} dV_g \leq \lt(\frac p{n(p-1)+\beta}\rt)^p \int_M \frac{|\pa_\rho f + (\frac{n-1}{\rho(x)} + \frac{J'(u_x,\rho(x))}{J(u_x,\rho(x)}) f|^p}{\rho(x)^\beta} dV_g.
\end{equation}
Furthermore, the constant $(\frac p{n(p-1)+\beta})^p$ is the best constant in \eqref{eq:onetwoinequality}.
\end{lemma}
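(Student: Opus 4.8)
The plan is to mimic the proof of Theorem \ref{Hardy} almost verbatim, but with a different integration by parts that produces the full radial divergence operator $\pa_\rho + (\frac{n-1}{\rho} + \frac{J'}{J})$ instead of the bare $\pa_\rho$. Write $F(y) = f(\text{\rm Exp}_P(y)) \in C_0^\infty(T_PM)$ and pass to polar coordinates via \eqref{eq:polar}. Then
\[
\int_M \frac{|f|^p}{\rho^{p+\beta}}\,dV_g = \int_{S^{n-1}}\int_0^\infty |F(\rho u)|^p J(u,\rho)\,\rho^{n-p-\beta-1}\,d\rho\,du.
\]
The key is to integrate by parts this time \emph{without} peeling off the Jacobian factor: write $\rho^{n-p-\beta-1} = \frac{1}{n(p-1)+\beta}\,\pa_\rho\!\bigl(\rho^{n(p-1)+\beta \text{-shifted exponent}}\bigr)$ — more precisely, since the exponent to be differentiated down must be $n-p-\beta$, and the measure already carries $\rho^{n-1}J(u,\rho)$, one integrates by parts the product $|F|^p \rho^{-p-\beta}$ against $d(\rho^n)/n$ weighted so that the boundary term vanishes (using $-n(p-1)<\beta$ to kill the endpoint at $\rho = 0$ and compact support to kill it at $\infty$). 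Differentiating $J(u,\rho)\rho^{n-1}$ reproduces exactly the factor $\rho^{n-1}J(u,\rho)\bigl(\tfrac{n-1}{\rho} + \tfrac{J'}{J}\bigr)$, which is why the full operator appears. This yields an identity
\[
(n(p-1)+\beta)\int_M \frac{|f|^p}{\rho^{p+\beta}}\,dV_g = -p\int_M \frac{|f|^{p-2}f}{\rho^{p+\beta-1}}\Bigl(\pa_\rho f + \bigl(\tfrac{n-1}{\rho}+\tfrac{J'}{J}\bigr)f\Bigr)\rho^{-(\ast)}\,dV_g + (\text{sign-definite terms}),
\]
after which Hölder's inequality (or, for the sharp remainder version, the $R_p$ identity \eqref{eq:Remainder}) applied to the right-hand side with exponents $p$ and $p' = p/(p-1)$ gives \eqref{eq:onetwoinequality}.

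Concretely, I would introduce the shorthand $h(x) := \pa_\rho f(x) + \bigl(\tfrac{n-1}{\rho(x)} + \tfrac{J'(u_x,\rho(x))}{J(u_x,\rho(x))}\bigr) f(x)$, observe that $\rho^{n-1}J(u,\rho)\,h = \pa_\rho\bigl(\rho^{n-1}J(u,\rho) f\bigr)$ along each ray, and compute
\[
\int_{S^{n-1}}\!\!\int_0^\infty \frac{|F|^{p-2}F\,h\,J(u,\rho)\rho^{n-1}}{\rho^{p+\beta}}\,d\rho\,du
= \int_{S^{n-1}}\!\!\int_0^\infty \frac{\pa_\rho(\rho^{n-1}J(u,\rho)F)}{\rho^{p+\beta}}|F|^{p-2}F\,d\rho\,du.
\]
Integrating by parts on the right and recognizing that $\pa_\rho(|F|^{p-2}F) = (p-1)|F|^{p-2}\pa_\rho F$, one peels the $\rho^{p+\beta}$ onto the other side and, combining the two appearances of $\int_{S^{n-1}}\int_0^\infty \rho^{-p-\beta-1}|F|^p J \rho^{n-1}$, lands on the identity displayed above. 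Then Hölder:
\[
\Bigl|\int_M \frac{|f|^{p-1}|h|}{\rho^{p+\beta-1}}\,dV_g\Bigr| \le \Bigl(\int_M \frac{|f|^p}{\rho^{p+\beta}}\,dV_g\Bigr)^{1/p'}\Bigl(\int_M \frac{|h|^p}{\rho^\beta}\,dV_g\Bigr)^{1/p},
\]
and dividing through by the first factor (finite by Theorem \ref{Hardy}, using that $\beta < n-p$) gives \eqref{eq:onetwoinequality} with constant $\bigl(\tfrac{p}{n(p-1)+\beta}\bigr)^p$.

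For sharpness I would test with the same family $f_\ep(x) = \phi(\rho(x))(1-\phi(\ep^{-1}\rho(x)))\,\rho(x)^{-\frac{n(p-1)+\beta}{p}}$ that saturates the integration by parts: as $\ep \to 0^+$ the quotient of $\int |h|^p \rho^{-\beta}$ over $\int |f_\ep|^p\rho^{-p-\beta}$ tends to $\bigl(\tfrac{n(p-1)+\beta}{p}\bigr)^p$, because on the bulk region $\ep < \rho < 1$ one has $h \approx \bigl(-\tfrac{n(p-1)+\beta}{p}\tfrac1\rho + \tfrac{n-1}{\rho} + \tfrac{J'}{J}\bigr)f_\ep = \bigl(\tfrac{n(p-1)+\beta}{p}\cdot\tfrac{-1}{\rho}+\tfrac{n-1}{\rho}+\cdots\bigr)f_\ep$; wait — the exponent is chosen so that $-\tfrac{n(p-1)+\beta}{p} + (n-1) + \rho\tfrac{J'}{J}$, and as $\ep\to0$ the leading contribution comes from small $\rho$ where $\rho J'/J \to 0$, leaving precisely the coefficient whose $p$-th power is the claimed constant; the cutoff-derivative terms contribute $O(1)$ while the main integral diverges logarithmically, exactly as in Theorem \ref{Hardy}. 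The main obstacle is bookkeeping the boundary terms in the integration by parts: one must verify that $\rho^{n-1}J(u,\rho)|F|^p\rho^{-p-\beta} \to 0$ as $\rho\to 0^+$, which is where the hypothesis $\beta > -n(p-1)$ is used (together with $J(u,\rho)\to 1$ and the smoothness of $F$ near the origin, so $|F(\rho u)|$ is bounded), and that it vanishes at $\rho = \infty$ by compact support. Everything else is a direct transcription of the Hardy argument with $\pa_\rho$ replaced by the radial divergence.
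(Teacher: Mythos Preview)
Your overall approach is correct and is essentially the same route the paper takes: pass to polar coordinates, integrate by parts along rays to obtain the identity
\[
\int_M \frac{|f|^p}{\rho^{p+\beta}}\,dV_g
=\frac{p}{n(p-1)+\beta}\int_M \frac{|f|^{p-2}f\,h}{\rho^{p+\beta-1}}\,dV_g
-\frac{p-1}{n(p-1)+\beta}\int_M \frac{|f|^p}{\rho^{p+\beta}}\frac{J'(u_x,\rho)\rho}{J(u_x,\rho)}\,dV_g,
\]
with $h=\pa_\rho f+(\tfrac{n-1}{\rho}+\tfrac{J'}{J})f$, and then apply $R_p$ (or H\"older) to the first term and drop the nonnegative $J'/J$ remainder. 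Your observation that $\rho^{n-1}J\,h=\pa_\rho(\rho^{n-1}J\,f)$ is a clean way to see why the full radial divergence appears; the paper instead starts from the Hardy identity and algebraically inserts $(\tfrac{n-1}{\rho}+\tfrac{J'}{J})f$ by adding and subtracting. Either route lands on the same identity above.

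There are, however, two concrete errors in your write-up.

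\textbf{(1) Wrong test-function exponent.} For sharpness you propose $f_\ep=\phi(\rho)(1-\phi(\ep^{-1}\rho))\,\rho^{-\frac{n(p-1)+\beta}{p}}$. This exponent does not make $\int_M |f_\ep|^p\rho^{-(p+\beta)}\,dV_g$ diverge logarithmically: in polar coordinates the radial integrand is $\rho^{-ap-p-\beta+n-1}J(u,\rho)$, and with $a=\tfrac{n(p-1)+\beta}{p}$ the exponent is not $-1$ in general, so either the integral stays bounded or the cutoff contributions blow up at the same rate and the ratio argument collapses. The correct choice---and the one the paper uses---is $a=\tfrac{n-p-\beta}{p}$, exactly as in Theorem~\ref{Hardy}. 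With this exponent one has $-a+(n-1)=\tfrac{n(p-1)+\beta}{p}$, so that on the bulk $h\sim\tfrac{n(p-1)+\beta}{p}\cdot\tfrac{f_\ep}{\rho}$ as $\rho\to0$, and the ratio tends to the claimed constant.

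\textbf{(2) Role of the hypotheses.} The boundary term in your integration by parts is $|F|^p J\,\rho^{n-p-\beta}$ at $\rho\to0^+$, and its vanishing uses $\beta<n-p$, not $\beta>-n(p-1)$. The lower bound $\beta>-n(p-1)$ is what makes $n(p-1)+\beta>0$, so that the coefficient in front of the $J'/J$ remainder is positive and that term can be discarded with the correct sign; it is not a boundary condition.
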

\begin{proof}
Suppose $f \in C_0^\infty(M)$, then
\[
F(y) = f(\text{\rm Exp}_P(y)) \in C^\infty_0(T_PM).
\]
It follows from the polar coordinate \eqref{eq:polar} and integration by parts that
\begin{align}\label{eq:onetwopolar}
\int_M \frac{|f(x)|^p}{\rho(x)^{p+\beta}} dV_g&=\int_{S^{n-1}}\int_0^\infty |F|^p J(u,\rho) \rho^{n-p-\beta-1} d\rho du\notag\\
&=-\frac{p}{n-p-\beta} \int_{S^{n-1}}\int_0^\infty |F|^{p-2}F \pa_\rho F J(u,\rho) \rho^{n-p-\beta} d\rho du\notag\\
&\quad -\frac1{n-p-\beta} \int_{S^{n-1}}\int_0^\infty |F|^p J'(u,\rho) \rho^{n-p-\beta} d\rho du\notag\\
&= -\frac{p}{n-p-\beta} \int_M \frac{|f|^{p-2} f}{\rho(x)^{\frac{p-1}p(p+ \beta)}} \frac{\pa_\rho f}{\rho(x)^{\frac\beta p}}dV_g\notag\\
&\quad -\frac1{n-p-\beta}\int_M \frac{|f|^p}{\rho(x)^{p+\beta}} \frac{J'(u_x,\rho(x))\rho(x)}{J(u_x,\rho(x))} dV_g,
\end{align}
here we use $\beta < n-p$. In the other hand, we have
\begin{align}\label{eq:onetwoadd}
\int_M \frac{|f|^{p-2} f}{\rho^{\frac{p-1}p(p+ \beta)}} \frac{\pa_\rho f}{\rho^{\frac\beta p}}dV_g& = \int_M \frac{|f|^{p-2} f}{\rho(x)^{\frac{p-1}p(p+ \beta)}} \frac{\pa_\rho f + (\frac{n-1}{\rho(x)} + \frac{J'(u_x,\rho(x))}{J(u_x,\rho(x))})f}{\rho(x)^{\frac\beta p}}dV_g\notag\\
&\quad -(n-1)\int_M \frac{|f|^p}{\rho^{p+\beta}} dV_g -\int_M \frac{|f|^p}{\rho(x)^{p+\beta}} \frac{J'(u_x,\rho(x))\rho(x)}{J(u_x,\rho(x))} dV_g.
\end{align}
Plugging \eqref{eq:onetwoadd} into \eqref{eq:onetwopolar}, we obtain
\begin{align*}
\int_M &\frac{|f|^p}{\rho(x)^{p+\beta}} dV_g\notag\\
&=\frac{p}{n(p-1)+\beta} \int_M \frac{|f|^{p-2} f}{\rho(x)^{\frac{p-1}p(p+ \beta)}} \frac{\pa_\rho f + (\frac{n-1}{\rho(x)} + \frac{J'(u_x,\rho(x))}{J(u_x,\rho(x))})f}{\rho(x)^{\frac\beta p}}dV_g\notag\\
&\quad -\frac{p-1}{n(p-1)+ \beta} \int_M \frac{|f|^p}{\rho(x)^{p+\beta}} \frac{J'(u_x,\rho(x))\rho(x)}{J(u_x,\rho(x))} dV_g\notag\\
&= \frac1p \lt(\frac{p}{n(p-1)+\beta}\rt)^p \int_M \frac{|\pa_\rho f + (\frac{n-1}{\rho(x)} + \frac{J'(u_x,\rho(x))}{J(u_x,\rho(x))})f|^p}{\rho(x)^{\beta}} dV_g+ \frac{p-1}p \int_{M}\frac{|f|^p}{\rho(x)^{p+\beta}} dV_g\notag\\
&\quad  -\int_M \frac{R_p\lt(\frac f{\rho(x)},\frac{p}{n(p-1)+\beta} (\pa_\rho f + (\frac{n-1}{\rho(x)} + \frac{J'(u_x,\rho(x))}{J(u_x,\rho(x))})f)\rt)}{\rho(x)^{\beta}} dV_g\notag\\
&\quad -\frac{p-1}{n(p-1)+ \beta} \int_M \frac{|f|^p}{\rho(x)^{p+\beta}} \frac{J'(u_x,\rho(x))\rho(x)}{J(u_x,\rho(x))} dV_g,
\end{align*}
which is equivalent to
\begin{align}\label{eq:onetwoidentity}
\int_M \frac{|f|^p}{\rho(x)^{p+\beta}} dV_g&= \lt(\frac{p}{n(p-1)+\beta}\rt)^p \int_M \frac{|\pa_\rho f + (\frac{n-1}{\rho(x)} + \frac{J'(u_x,\rho(x))}{J(u_x,\rho(x))})f|^p}{\rho(x)^{\beta}} dV_g\notag\\
&\quad -p\int_M \frac{R_p\lt(\frac f{\rho(x)},\frac{p}{n(p-1)+\beta} (\pa_\rho f + (\frac{n-1}{\rho(x)} + \frac{J'(u_x,\rho(x))}{J(u_x,\rho(x))})f)\rt)}{\rho(x)^{\beta}} dV_g\notag\\
&\quad -\frac{p(p-1)}{n(p-1)+ \beta} \int_M \frac{|f|^p}{\rho(x)^{p+\beta}} \frac{J'(u_x,\rho(x))\rho(x)}{J(u_x,\rho(x))} dV_g.
\end{align}
Since $\beta > -n(p-1)$, $R_p\geq 0$ and $J'(u,\rho) \geq 0$, then the inequality \eqref{eq:onetwoinequality} is an immediate consequence of \eqref{eq:onetwoidentity}. It remains to check the sharpness of \eqref{eq:onetwoinequality}. For $0 < \de < 1/2$, define
\[
f_\de(x) = \vphi(\rho(x)) (1 -\vphi(\de^{-1}\rho(x))) \rho(x)^{-\frac{n-p-\beta}p},
\]
where $\vphi$ is cut-off function in $(-1,1)$. An easy computation shows that
\[
\int_M \frac{f_\de(x)^p}{\rho(x)^{p+\de}} dV_g = \int_{S^{n-1}}\int_{\frac\de2}^{1} \vphi(\rho)^p (1-\vphi(\de^{-1}\rho))^p J(u,\rho) \rho^{-1} d\rho du \geq |S^{n-1}| \ln (2\de)^{-1}.
\]
Hence
\[
\lim_{\de \to 0}\frac{f_\de(x)^p}{\rho(x)^{p+\de}} dV_g = \infty.
\]
Obviously, we have
\begin{align*}
\pa_\rho f + &\lt(\frac{n-1}{\rho(x)} + \frac{J'(u_x,\rho(x))}{J(u_x,\rho(x))}\rt)f\\
&= \vphi'(\rho) \rho^{-\frac{n-p-\beta}p} -\frac1\de \vphi'(\de^{-1}\rho)\rho^{-\frac{n-p-\beta}p} + \frac{J'}{J}\vphi(\rho)(1 -\vphi(\de^{-1}\rho)) \rho^{-\frac{n-p-\beta}p}\\
&\quad + \frac{n(p-1)+\beta}p \vphi(\rho)(1 -\vphi(\de^{-1}\rho)) \rho^{-\frac{n-\beta}p}.
\end{align*}
We can readily check that
\[
\int_M \frac{|\vphi'(\rho) \rho^{-\frac{n-p-\beta}p}|^p}{\rho(x)^{\beta}} dV_g = O(1),
\]
\[
\int_M \frac{|\frac1\de \vphi'(\de^{-1}\rho)\rho^{-\frac{n-p-\beta}p}|^p}{\rho(x)^{\beta}} dV_g = O(1),
\]
\[
\int_M \frac{|\frac{J'}{J}\vphi(\rho)(1 -\vphi(\de^{-1}\rho)) \rho^{-\frac{n-p-\beta}p}|^p}{\rho(x)^{\beta}} dV_g = O(1),
\]
and
\[
\int_M \frac{|\vphi(\rho)(1 -\vphi(\de^{-1}\rho)) \rho^{-\frac{n-\beta}p}|^p}{\rho(x)^{\beta}} dV_g = \int_M\frac{f_\de(x)^p}{\rho(x)^{p+\de}} dV_g.
\]
Therefore,
\[
\lim_{\de\to 0^+} \frac{\int_M \frac{|\pa_\rho f + \lt(\frac{n-1}{\rho(x)} + \frac{J'(u_x,\rho(x))}{J(u_x,\rho(x))}\rt)f|^p}{\rho(x)^{p+\beta}}dV_g}{\int_M\frac{f_\de(x)^p}{\rho(x)^{p+\de}} dV_g} = \lt(\frac{n(p-1)+ \beta}p\rt)^p.
\]
This finishes our proof.
\end{proof}

If $K_M \leq -b \leq 0$, then the identity \eqref{eq:onetwoidentity} implies a quantitative version of \eqref{eq:onetwoinequality} as follows
\begin{align}\label{eq:quanti12}
\int_M &\frac{|\pa_\rho f + (\frac{n-1}{\rho(x)} + \frac{J'(u_x,\rho(x))}{J(u_x,\rho(x)}) f|^p}{\rho(x)^\beta} dV_g \notag\\
&\qquad\qquad\geq \lt(\frac{n(p-1)+\beta}p\rt)^p\int_M \frac{|f|^p}{\rho(x)^{p+\beta}} dV_g\notag\\
&\qquad\qquad \quad +3b(n-1)(p-1) \lt(\frac{n(p-1)+\beta} p\rt)^{p-1} \int_{M} \frac{|f|^p}{\rho^{p+ \beta -2}(\pi^2 + b \rho^2)} dV_g.
\end{align}

Replacing $f$ by $\pa_\rho f$ in Lemma \ref{onetwolemma} and \eqref{eq:quanti12}, we obtain the following Rellich type inequality which connects first to second order derivatives.

\begin{theorem}\label{Rellichonetwo}
Let $(M,g)$ be an $n-$dimensional Cartan--Hadamard manifold. Suppose that $n \geq 2$, $p\in (1,n)$ and $-n(p-1)< \beta < n-p$. There holds for any $f \in C_0^\infty(M)$
\begin{equation}\label{eq:Rellichonetwo}
\int_M \frac{|\Delta_{g,\rho} f|^p}{\rho(x)^\beta} dV_g \geq \lt(\frac{n(p-1)+\beta}p\rt)^p \int_M \frac{|\pa_\rho f|^p}{\rho(x)^{p+\beta}} dV_g.
\end{equation}
Furthermore, the constant $(\frac{n(p-1)+\beta}p)^p$ is the best constant in \eqref{eq:Rellichonetwo}.

If $K_M \leq -b \leq 0$, then we have
\begin{align}\label{eq:quantiRellichonetwo}
\int_M \frac{|\Delta_{g,\rho} f|^p}{\rho(x)^\beta} dV_g &\geq \lt(\frac{n(p-1)+\beta}p\rt)^p \int_M \frac{|\pa_\rho f|^p}{\rho(x)^{p+\beta}} dV_g\notag\\
&\quad + 3b(n-1)(p-1) \lt(\frac{n(p-1)+\beta} p\rt)^{p-1} \int_{M} \frac{|\pa_\rho f|^p}{\rho^{p+ \beta -2}(\pi^2 + b \rho^2)} dV_g.
\end{align}
\end{theorem}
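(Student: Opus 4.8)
The whole argument is the substitution $f \mapsto \pa_\rho f$ in Lemma~\ref{onetwolemma} and in the identity \eqref{eq:onetwoidentity} that underlies the quantitative bound \eqref{eq:quanti12}. The crucial observation is purely definitional: by \eqref{eq:radialLap},
\[
\pa_\rho\lt(\pa_\rho f\rt)(x) + \lt(\frac{n-1}{\rho(x)} + \frac{J'(u_x,\rho(x))}{J(u_x,\rho(x))}\rt)\pa_\rho f(x) = \De_{g,\rho} f(x),\qquad x = \text{\rm Exp}_P(\rho(x)u_x).
\]
Hence, once the substitution is justified, inserting $g := \pa_\rho f$ for $f$ in \eqref{eq:onetwoinequality} turns its right-hand side into $\lt(\frac{p}{n(p-1)+\beta}\rt)^p \int_M |\De_{g,\rho} f|^p\,\rho(x)^{-\beta}\,dV_g$ and its left-hand side into $\int_M |\pa_\rho f|^p\,\rho(x)^{-p-\beta}\,dV_g$, which is exactly \eqref{eq:Rellichonetwo}; the same substitution in \eqref{eq:quanti12} produces \eqref{eq:quantiRellichonetwo} with the asserted remainder term.

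\textbf{Justifying the substitution.} The one real issue is that $\pa_\rho f$ is not in $C_0^\infty(M)$. Writing $F = f\circ \text{\rm Exp}_P$ and, in normal coordinates, $y = \rho u$, the function $\pa_\rho f = \la y/|y|,\na F(y)\ra$ is smooth and compactly supported on $M\setminus\{P\}$ and bounded near $P$, but in general has only a direction-dependent limit at $P$. Therefore $\De_{g,\rho} f = \pa_\rho^2 f + (\tfrac{n-1}{\rho}+\tfrac{J'}{J})\pa_\rho f$ has at worst a $\rho^{-1}$ singularity at $P$, so both $\int_M |\De_{g,\rho} f|^p\,\rho^{-\beta}\,dV_g$ and $\int_M |\pa_\rho f|^p\,\rho^{-p-\beta}\,dV_g$ are finite precisely because $\beta < n-p$ (the singular parts are dominated by $\int_0^1 \rho^{\,n-1-p-\beta}\,d\rho < \infty$). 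To make the substitution rigorous I would note that smoothness of the argument enters the proof of Lemma~\ref{onetwolemma} only through the integration by parts in polar coordinates in \eqref{eq:onetwopolar}; that computation (and the subsequent algebra \eqref{eq:onetwoadd} together with the definition of $R_p$) remains valid for any compactly supported function that is locally Lipschitz on $M\setminus\{P\}$ and for which the relevant weighted integrals converge, since the boundary term at $\rho=0$ vanishes (the integrand is bounded and $n-p-\beta>0$) and there is no boundary term at $\rho=\infty$. Thus \eqref{eq:onetwoidentity}, and hence \eqref{eq:onetwoinequality} and \eqref{eq:quanti12}, hold verbatim with $g=\pa_\rho f$. (Equivalently, one mollifies $\pa_\rho f$ and passes to the limit.) This bookkeeping is the only obstacle of substance.

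\textbf{Sharpness.} To show $\lt(\frac{n(p-1)+\beta}{p}\rt)^p$ cannot be lowered in \eqref{eq:Rellichonetwo}, reuse the extremal family of the proof of Lemma~\ref{onetwolemma}: the radial functions $g_\de(x)=\vphi(\rho(x))\lt(1-\vphi(\de^{-1}\rho(x))\rt)\rho(x)^{-\frac{n-p-\beta}{p}}$, which are supported in a fixed geodesic ball and vanish near $P$. Set
\[
f_\de(x) = -\int_{\rho(x)}^{\infty} g_\de(s)\,ds .
\]
Then $f_\de$ is radial, smooth on $M\setminus\{P\}$, and equal near $P$ to the constant $-\int_0^\infty g_\de(s)\,ds$, so $f_\de\in C_0^\infty(M)$; moreover $\pa_\rho f_\de = g_\de$ and $\De_{g,\rho} f_\de = \pa_\rho g_\de + (\tfrac{n-1}{\rho}+\tfrac{J'}{J})g_\de$. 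Consequently
\[
\frac{\int_M |\De_{g,\rho} f_\de|^p\,\rho(x)^{-\beta}\,dV_g}{\int_M |\pa_\rho f_\de|^p\,\rho(x)^{-p-\beta}\,dV_g}
=\frac{\int_M \lt|\pa_\rho g_\de + \lt(\tfrac{n-1}{\rho}+\tfrac{J'}{J}\rt)g_\de\rt|^p\,\rho(x)^{-\beta}\,dV_g}{\int_M |g_\de|^p\,\rho(x)^{-p-\beta}\,dV_g},
\]
which is exactly the quotient shown in the sharpness part of Lemma~\ref{onetwolemma} to tend to $\lt(\frac{n(p-1)+\beta}{p}\rt)^p$ as $\de\to0^+$. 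Together with \eqref{eq:Rellichonetwo} this proves optimality, and the sharpness of \eqref{eq:quantiRellichonetwo} follows at once.
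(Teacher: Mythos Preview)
Your approach is exactly the one the paper uses: the paper's entire proof is the single sentence ``Replacing $f$ by $\pa_\rho f$ in Lemma~\ref{onetwolemma} and \eqref{eq:quanti12}, we obtain the following Rellich type inequality.'' You carry out precisely this substitution, and in fact supply more detail than the paper does---the paper does not address the regularity of $\pa_\rho f$ at $P$ or give an explicit sharpness construction for \eqref{eq:Rellichonetwo}, whereas you handle both points carefully.
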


Combining \eqref{eq:Rellichonetwo}, \eqref{eq:quantiRellichonetwo}, \eqref{eq:subcriticalH} and \eqref{eq:quantitativeHb1}, we get the following weighted Rellich inequalities on $M$.

\begin{theorem}\label{Rellich}
Let $M$ be an $n-$dimensional Cartan--Hadamard manifold. Suppose that $n \geq 3$, $p\in (1,n/2)$ and $-n(p-1)< \beta < n-2p$. There holds for any $f \in C_0^\infty(M)$
\begin{equation}\label{eq:Rellich}
\int_M \frac{|\Delta_{g,\rho} f|^p}{\rho(x)^\beta} dV_g \geq \lt(\frac{(n(p-1)+\beta)(n-2p-\beta)}{p^2}\rt)^p\int_M \frac{|f|^p}{\rho(x)^{2p+\beta}} dV_g.
\end{equation}
Furthermore, the constant $(\frac{(n(p-1)+\beta)(n-2p-\beta)}{p^2})^p$ is the best constant in \eqref{eq:Rellich}.

If $K_M \leq -b \leq 0$, then we have
\begin{align}\label{eq:quantiRellich}
\int_M &\frac{|\Delta_{g,\rho} f|^p}{\rho(x)^\beta} dV_g\notag\\
 &\quad\geq \lt(\frac{(n(p-1)+\beta)(n-2p-\beta)}{p^2}\rt)^p\int_M \frac{|f|^p}{\rho(x)^{2p+\beta}} dV_g\notag\\
&\quad\quad + 3b(n-1)(p-1) \lt(\frac{n(p-1)+\beta} p\rt)^{p-1} \int_{M} \frac{|\pa_\rho f|^p}{\rho^{p+ \beta -2}(\pi^2 + b \rho^2)} dV_g\notag\\
&\quad\quad + 3b(n-1)\lt(\frac{n-2p-\beta}p\rt)^{p-1} \lt(\frac{n(p-1)+ \beta}p\rt)^p \int_M \frac{|f|^p}{\rho^{2p+\beta -2}(\pi^2 + b\rho^2)} dV_g.
\end{align}
\end{theorem}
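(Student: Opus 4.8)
The plan is to deduce \eqref{eq:Rellich} by composing two estimates already established: the first-to-second order Rellich inequality \eqref{eq:Rellichonetwo} of Theorem \ref{Rellichonetwo} (which controls $|\pa_\rho f|$ by the radial Laplacian $\Delta_{g,\rho}f$) and the weighted Hardy inequality \eqref{eq:subcriticalH} of Theorem \ref{Hardy} (which controls $|f|$ by $|\pa_\rho f|$). First I would apply \eqref{eq:subcriticalH} with the weight exponent $\beta$ replaced by $p+\beta$; this is admissible since the hypothesis $\beta<n-2p$ gives exactly $p+\beta<n-p$ and Theorem \ref{Hardy} imposes no lower bound on the exponent. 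It yields
\[
\int_M \frac{|f|^p}{\rho(x)^{2p+\beta}}\, dV_g \le \lt(\frac{p}{n-2p-\beta}\rt)^p \int_M \frac{|\pa_\rho f|^p}{\rho(x)^{p+\beta}}\, dV_g ,
\]
i.e. $\int_M |\pa_\rho f|^p\rho^{-p-\beta}\, dV_g \ge \big(\tfrac{n-2p-\beta}{p}\big)^p \int_M |f|^p\rho^{-2p-\beta}\, dV_g$. Inserting this into the right-hand side of \eqref{eq:Rellichonetwo}, which applies because $-n(p-1)<\beta<n-2p<n-p$, and multiplying the two positive constants produces precisely $\big(\tfrac{(n(p-1)+\beta)(n-2p-\beta)}{p^2}\big)^p$, which is \eqref{eq:Rellich}.

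For the quantitative refinement \eqref{eq:quantiRellich} under the curvature pinch $K_M\le -b\le 0$, I would run the same chain while tracking the remainders. Starting from \eqref{eq:quantiRellichonetwo}, I estimate its leading term $\big(\tfrac{n(p-1)+\beta}{p}\big)^p\int_M|\pa_\rho f|^p\rho^{-p-\beta}\,dV_g$ from below by the quantitative Hardy inequality \eqref{eq:quantitativeHb1} with $\beta$ replaced by $p+\beta$; this produces the principal term of \eqref{eq:quantiRellich} together with its last remainder term $3b(n-1)\big(\tfrac{n-2p-\beta}{p}\big)^{p-1}\big(\tfrac{n(p-1)+\beta}{p}\big)^p\int_M |f|^p\rho^{-2p-\beta+2}(\pi^2+b\rho^2)^{-1}\,dV_g$. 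The remaining remainder of \eqref{eq:quantiRellichonetwo}, involving $\int_M|\pa_\rho f|^p\rho^{-p-\beta+2}(\pi^2+b\rho^2)^{-1}\,dV_g$, is carried through unchanged and becomes the middle term of \eqref{eq:quantiRellich}. Collecting the three contributions yields \eqref{eq:quantiRellich}.

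It remains to check that the constant in \eqref{eq:Rellich} is optimal, for which I would test directly with a truncated power. Put $\gamma=(n-2p-\beta)/p$ and, with $\vphi$ the cut-off in $(-1,1)$ used in the earlier proofs, set $f_\de(x)=\vphi(\rho(x))\big(1-\vphi(\de^{-1}\rho(x))\big)\rho(x)^{-\gamma}$ for small $\de>0$. On the annulus $\de<\rho(x)<1/2$, where both cut-offs equal $1$, one computes $\Delta_{g,\rho}f_\de=\gamma(\gamma-n+2)\,\rho^{-\gamma-2}-\gamma\,\tfrac{J'(u,\rho)}{J(u,\rho)}\,\rho^{-\gamma-1}$, and the choice of $\gamma$ makes $|\gamma(\gamma-n+2)|=\tfrac{(n-2p-\beta)(n(p-1)+\beta)}{p^2}$, which is positive under our hypotheses. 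Using the polar formula \eqref{eq:polar} and the monotonicity of $J$, the integral $\int_M|f_\de|^p\rho^{-2p-\beta}\,dV_g$ diverges logarithmically as $\de\to 0^+$, while every contribution to $\int_M|\Delta_{g,\rho}f_\de|^p\rho^{-\beta}\,dV_g$ other than the leading one — those from the derivatives of the two cut-offs and from the term $\tfrac{J'}{J}\rho^{-\gamma-1}$ (recall $J'/J$ is bounded on $[0,1]$) — remains bounded, or of strictly lower order, in $1/\de$. Hence the ratio of the two sides tends to $|\gamma(\gamma-n+2)|^p$, i.e. to the announced constant. The one step that genuinely needs care is this last estimate: since $\Delta_{g,\rho}$ is second order and the density factor $J$ intervenes, one must cleanly separate the honestly log-divergent part from the bounded remainders; everything else is a plain concatenation of inequalities already in hand.
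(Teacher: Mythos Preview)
Your proposal is correct and follows essentially the same route as the paper: the inequality \eqref{eq:Rellich} is obtained by chaining Theorem \ref{Hardy} (with exponent $p+\beta$) and Theorem \ref{Rellichonetwo}, the quantitative version \eqref{eq:quantiRellich} by iterating \eqref{eq:quantitativeHb1} and \eqref{eq:quantiRellichonetwo}, and the sharpness by the same truncated power $f_\de(x)=\vphi(\rho)(1-\vphi(\de^{-1}\rho))\rho^{-(n-2p-\beta)/p}$. Your discussion of the sharpness step is in fact more explicit than the paper's, which merely states that ``straightforward (but tedious) computations'' give the limiting ratio.
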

\begin{proof}
Since $-n(p-1) < p+ \beta < n-p$, then by the weighted Hardy inequality \eqref{eq:subcriticalH}, we have
\[
\int_{M} \frac{|f|^p}{\rho^{2p+\beta}} dV_g \leq \lt(\frac p{n-2p-\beta}\rt)^p \int_M \frac{|\pa_\rho f|^2}{\rho^{p+\beta}} dV_g.
\]
In the other hand, by \eqref{eq:Rellichonetwo} we get
\[
\int_M \frac{|\pa_\rho f|^2}{\rho^{p+\beta}} dV_g \leq  \lt(\frac{p}{n(p-1)+\beta}\rt)^p \int_M \frac{|\Delta_{g,\rho }f|^p}{\rho(x)^\beta} dV_g.
\]
Combining these two estimates, we obtain \eqref{eq:Rellich}. To check the sharpness of \eqref{eq:Rellich}, we use the approximation of $\rho^{-(n-2p-\beta)/p}$ as follows
\[
f_\de(x) = \vphi(\rho(x)) (1-\vphi(\de^{-1}\rho(x))) \rho(x)^{-\frac{n-2p-\beta}p}
\]
where $\vphi$ is cut-off function in $C^\infty_0((-1,1))$ and $0 < \de < 1/2$. Using the same argument as in the proof of Lemma \ref{onetwolemma} by making the straightforward (but tedious) compuations, we can show that
\[
\lim_{\de\to 0^+} \frac{\int_M \frac{|\Delta_{g,\rho} f_\de|^p}{\rho(x)^\beta} dV_g}{\int_{M} \frac{|f|^p}{\rho^{2p+\beta}} dV_g} =\lt(\frac{(n-2p-\beta)(n(p-1)+\beta)}{p^2}\rt)^p
\]
which implies the sharpness of \eqref{eq:Rellich}.

The proof of \eqref{eq:quantiRellich} is completely similar by iterating \eqref{eq:quantitativeHb1} and \eqref{eq:quantiRellichonetwo}.
\end{proof}

We next consider the critical case $\beta = n-2p$. In this case, we obtain a critical Rellich inequality which generalizes the inequality \eqref{eq:criticalH} to order two.

\begin{theorem}\label{criticalRellich}
Let $(M,g)$ be an $n-$dimensional Cartan--Hadamard manifold. Suppose that $n \geq 3$, $p\in (1,n)$. There holds for any $f \in C_0^\infty(B_1(P))$
\begin{equation}\label{eq:criticalRellich}
\int_{B_1(P)} \frac{|f(x)|^p}{\rho(x)^{n}(\ln \frac1{\rho(x)})^p} dV_g \leq \lt(\frac{p}{(p-1)(n-2)}\rt)^p \int_{B_1(P)} \frac{|\Delta_{g,\rho} f|^p}{\rho(x)^{n-2p}} dV_g.
\end{equation}
Furthermore, the constant $(\frac p{(n-2)(p-1)})^p$ is the best constant in \eqref{eq:criticalRellich}.

If $K_M \leq -b \leq 0$, then we have
\begin{align}\label{eq:quantiCR}
\int_{B_1(P)} &\frac{|\Delta_{g,\rho} f|^p}{\rho^{n-2p}} dV_g \notag\\
&\quad\quad\geq \lt(\frac{(p-1)(n-2)}p\rt)^p\int_{B_1(P)} \frac{|f(x)|^p}{\rho(x)^{n}(\ln \frac1{\rho(x)})^p} dV_g\notag\\
&\quad\quad\quad + 3b(n-1)(p-1) (n-2)^{p-1}\int_{B_1(P)} \frac{|\pa_\rho f|^p}{\rho^{n-p-2}(\pi^2+ b\rho^2)}dV_g\notag\\
&\quad\quad\quad + 3b(n-1)(n-2)^p\lt(\frac{p-1}{p}\rt)^{p-1} \int_{B_1(P)}\frac{|f|^p}{\rho^{n-2} (\ln \frac1\rho)^{p-1}(\pi^2 + b\rho^2)} dV_g.
\end{align}
\end{theorem}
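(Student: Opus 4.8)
The plan is to derive \eqref{eq:criticalRellich} by the two-step iteration that underlies all the Rellich bounds in this paper: first descend, at the critical exponent, from the second-order weight to a first-order one via Theorem~\ref{Rellichonetwo}, then feed the outcome into the critical Hardy inequality of Theorem~\ref{CH}. Concretely, let $f\in C_0^\infty(B_1(P))$, extended by zero to all of $M$, and apply Theorem~\ref{Rellichonetwo} with $\beta=n-2p$; the admissibility requirement $-n(p-1)<n-2p<n-p$ holds since $n\ge 3$ and $p>1$. As $n(p-1)+(n-2p)=p(n-2)$, inequality \eqref{eq:Rellichonetwo} reads
\[
\int_{B_1(P)} \frac{|\Delta_{g,\rho} f|^p}{\rho^{n-2p}}\,dV_g \ge (n-2)^p \int_{B_1(P)} \frac{|\pa_\rho f|^p}{\rho^{n-p}}\,dV_g ,
\]
and the critical Hardy inequality \eqref{eq:criticalH} (for the same $p$) gives
\[
\int_{B_1(P)} \frac{|\pa_\rho f|^p}{\rho^{n-p}}\,dV_g \ge \lt(\frac{p-1}{p}\rt)^p \int_{B_1(P)} \frac{|f|^p}{\rho^n\lt(\ln\frac1\rho\rt)^p}\,dV_g .
\]
Chaining these two yields \eqref{eq:criticalRellich} with constant $\lt(\frac{p}{(n-2)(p-1)}\rt)^p$.

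For the sharpness I would test \eqref{eq:criticalRellich} against the extremal family from the proof of Theorem~\ref{CH}, $f_\de(x)=\lt(\ln\frac1{\rho(x)}\rt)^{\frac{p-1}{p}-\de}\vphi(\rho(x))$ with $0<\de<1/2$ and $\vphi$ a fixed cut-off equal to $1$ near $0$. Since $f_\de$ is radial, $\Delta_{g,\rho}f_\de=f_\de''+\lt(\frac{n-1}{\rho}+\frac{J'}{J}\rt)f_\de'$; a direct differentiation, together with $J'/J=O(\rho)$ as $\rho\to0^+$ from \eqref{eq:density}, shows that on the region where $\vphi\equiv1$
\[
\Delta_{g,\rho}f_\de = -\,\frac{n-2}{\rho^2}\lt(\frac{p-1}{p}-\de\rt)\lt(\ln\frac1\rho\rt)^{-\frac1p-\de}+\text{(lower-order terms as $\rho\to0^+$)},
\]
the lower-order terms carrying either an extra factor $(\ln\frac1\rho)^{-1}$ (from $f_\de''$) or an extra factor $\rho^2$ (from $J'/J$), while the set $\{\vphi'\ne0\}$ contributes only $O(1)$. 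Raising to the $p$-th power, dividing by $\rho^{n-2p}$, passing to the polar coordinates \eqref{eq:polar} and using $J(u,\rho)\to1$, one finds, exactly as in Theorem~\ref{CH},
\[
\int_{B_1(P)} \frac{|\Delta_{g,\rho}f_\de|^p}{\rho^{n-2p}}\,dV_g \sim (n-2)^p\lt(\frac{p-1}{p}-\de\rt)^p \frac{|S^{n-1}|}{p\de},\qquad \int_{B_1(P)} \frac{|f_\de|^p}{\rho^n(\ln\frac1\rho)^p}\,dV_g \sim \frac{|S^{n-1}|}{p\de}
\]
as $\de\to0^+$, so their quotient tends to $\lt(\frac{(n-2)(p-1)}{p}\rt)^p$ and the constant in \eqref{eq:criticalRellich} is optimal.

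The quantitative estimate \eqref{eq:quantiCR} under $K_M\le-b\le0$ follows from the same iteration performed at the quantitative level: specialize \eqref{eq:quantiRellichonetwo} to $\beta=n-2p$ (so that $\frac{n(p-1)+\beta}{p}=n-2$ and $p+\beta-2=n-p-2$) and then substitute the quantitative critical Hardy inequality \eqref{eq:QuantiCHb1} for $\int_{B_1(P)}\rho^{-(n-p)}|\pa_\rho f|^p\,dV_g$; collecting the three resulting integrals reproduces \eqref{eq:quantiCR} verbatim.

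The only genuinely delicate point is the sharpness step: one must verify that every summand of $\Delta_{g,\rho}f_\de$ besides the displayed principal term contributes, after raising to the $p$-th power, multiplying by $\rho^{-(n-2p)}$ and integrating, only a bounded quantity (hence $o(\de^{-1})$). This uses the elementary bound $\big|\,|a+b|^p-|a|^p\,\big|\le C_p\big(|a|^{p-1}|b|+|b|^p\big)$ applied with $|b|\ll|a|$, the expansion \eqref{eq:density} for $J$, and the convergence of integrals of the form $\int_0^{1/2}\rho^{-1}(\ln\frac1\rho)^{-2-p\de}\,d\rho$. Once this routine bookkeeping is in place, the rest of the proof is an immediate consequence of Theorems~\ref{Rellichonetwo}, \ref{CH} and \ref{QuanCH}.
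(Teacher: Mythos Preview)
Your proof is correct and follows essentially the same approach as the paper: you obtain \eqref{eq:criticalRellich} by chaining Theorem~\ref{Rellichonetwo} at $\beta=n-2p$ with the critical Hardy inequality \eqref{eq:criticalH}, you verify sharpness with the same test family $f_\de(x)=(\ln\frac1{\rho(x)})^{\frac{p-1}{p}-\de}\vphi(\rho(x))$, and you derive \eqref{eq:quantiCR} by combining \eqref{eq:quantiRellichonetwo} with \eqref{eq:QuantiCHb1}. In fact you supply more detail on the sharpness computation than the paper itself, which simply refers back to the calculations in Theorem~\ref{CH}.
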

\begin{proof}
The inequality \eqref{eq:criticalRellich} is consequence of \eqref{eq:criticalH} and \eqref{eq:Rellichonetwo} with $\beta = n-2p$. Note that the condition $-n(p-1) < \beta < n-p$ holds true since $n\geq 3$. To check the sharpness of \eqref{eq:criticalRellich}, we use the following sequence of test functions
\[
f_\de(x) =\lt(\ln \frac 1{\rho(x)}\rt)^{\frac{p-1}p -\de} \vphi(\rho(x)),
\]
where $\vphi$ is cut-off function in $(-1,1)$. Making the computations as in the proof of Theorem \ref{CH}, we obtain the desire result.

The inequality \eqref{eq:quantiCR} is followed from \eqref{eq:QuantiCHb1} and \eqref{eq:quantiRellichonetwo}.
\end{proof}

Iterating the weighted Hardy and Rellich inequalities (both in the subcritical and critical cases), we obtain the following weighted Rellich inequality for higher order derivatives (both in the subcritical and critical cases respectively) on $M$. The detail proof is left to the readers. Let us denote
\[
c(n,2l,\beta,p) = \lt(\prod_{i=0}^{l-1} \frac{p^2}{(n-2p-\beta-2ip)(n(p-1)+\beta +2ip)}\rt)^p
\]
for $l\geq 1$, $p\in (1, n/(2l))$ and $-n(p-1) < \beta < n-2lp$.
\begin{theorem}\label{Rellichhigher}
Let $M$ be an $n-$dimensional Cartan--Hadamard manifold and let $k$ be a positive integer. Suppose that $n \geq 3$, and $p\in (1,n/k)$. Then for any function $f \in C_0^\infty(M)$ the following inequalities hold true.
\begin{description}
\item (i) If $k =2l$, $l\geq 1$ and $n(1-p) < \beta < n-2lp$, then we have
\begin{equation}\label{eq:even}
\int_{M} \frac{|f|^p}{\rho(x)^{2lp + \beta}} dV_g \leq c(n,2l,\beta,p) \int_M \frac{|\De_{g,\rho}^lf|^p }{\rho(x)^\beta} dV_g,
\end{equation}
and if $K_M \leq -b \leq 0$ then 
\begin{align}\label{eq:quantieven}
\int_M \frac{|\De_{g,\rho}^lf|^p }{\rho(x)^\beta} dV_g &\geq \frac1{c(n,2l,\beta,p)}\int_{M} \frac{|f|^p}{\rho(x)^{2lp + \beta}} dV_g \notag\\
&\quad + \frac{3b(n-1)p}{(n-2lp-\beta)c(n,2l,\beta,p)} \int_M \frac{|f|^p}{\rho^{2lp+\be -2}(\pi^2 + b \rho^2)} dV_g.
\end{align}
\item (ii) If $k =2l+1$, $l\geq 1$ and $n-n(p+1) < \beta < n-(2l+1)p$ then we have
\begin{equation}\label{eq:odd}
\int_{M} \frac{|f|^p}{\rho(x)^{(2l+1)p + \beta}} dx \leq \frac{p^p}{(n-p-\beta)^p}c(n,2l,p+\beta,p)\int_M \frac{|\pa_\rho \De_{g,\rho}^l f|^p}{\rho(x)^\beta} dV_g.
\end{equation}
and if $K_M \leq -b \leq 0$ then 
\begin{align}\label{eq:quantiodd}
&\frac{p^p}{(n-p-\beta)^p}c(n,2l,p+\beta,p)\int_M \frac{|\pa_\rho\De_{g,\rho}^lf|^p }{\rho(x)^\beta} dV_g \notag\\
&\quad \geq \int_{M} \frac{|f|^p}{\rho^{(2l+1)p + \beta}} dV_g  + \frac{3b(n-1)p}{(n-(2l+1)p-\beta)} \int_M \frac{|f|^p}{\rho^{(2l+1)p+\be -2}(\pi^2 + b \rho^2)} dV_g.
\end{align}
\end{description}
Furthermore, the inequalities \eqref{eq:even} and \eqref{eq:odd} are sharp.
\end{theorem}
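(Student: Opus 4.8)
The plan is to derive all four inequalities \eqref{eq:even}, \eqref{eq:quantieven}, \eqref{eq:odd}, \eqref{eq:quantiodd} by iterating results already proved, letting the radial weight exponent climb in steps of $2p$. The two elementary bricks are the weighted Hardy inequality \eqref{eq:subcriticalH} of Theorem \ref{Hardy} together with its quantitative refinement \eqref{eq:quantitativeHb1}, and the second order Rellich inequality \eqref{eq:Rellich} of Theorem \ref{Rellich} together with \eqref{eq:quantiRellich}; since the latter is itself \eqref{eq:subcriticalH} composed with the one to two inequality \eqref{eq:Rellichonetwo}, what one really iterates is (Theorem \ref{Rellichonetwo}) followed by (Theorem \ref{Hardy}).

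For the even case $k=2l$ I would apply Theorem \ref{Rellich} successively to $\Delta_{g,\rho}^{\,l-1}f,\ \Delta_{g,\rho}^{\,l-2}f,\dots,\Delta_{g,\rho}^{\,0}f=f$: at step $i$ ($i=0,\dots,l-1$) one uses \eqref{eq:Rellich} with the weight $\beta_i:=\beta+2ip$ replacing $\beta$, so $\int_M|\Delta_{g,\rho}^{\,l-i}f|^p\rho^{-\beta_i}\,dV_g$ is bounded below by $\bigl(\tfrac{(n(p-1)+\beta_i)(n-2p-\beta_i)}{p^2}\bigr)^p\int_M|\Delta_{g,\rho}^{\,l-i-1}f|^p\rho^{-\beta_{i+1}}\,dV_g$. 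Composing the $l$ steps produces exactly the constant $1/c(n,2l,\beta,p)$ and the terminal weight $\beta_l=\beta+2lp$, which is \eqref{eq:even}; one only has to check $n(1-p)<\beta_i<n-2p$ for $0\le i\le l-1$, and this is immediate from $n(1-p)<\beta$ and $\beta<n-2lp$. For \eqref{eq:quantieven} I would run the same chain but carry out the \emph{last} Hardy step (the one of weight $p+\beta_{l-1}$) through the quantitative inequality \eqref{eq:quantitativeHb1}, discarding all other remainders; the surviving remainder equals $3b(n-1)\bigl(\tfrac{n-2lp-\beta}{p}\bigr)^{p-1}\int_M|f|^p\rho^{\,2-2lp-\beta}(\pi^2+b\rho^2)^{-1}\,dV_g$ and, telescoped against the accumulated leading constants, collapses to the coefficient $\tfrac{3b(n-1)p}{(n-2lp-\beta)\,c(n,2l,\beta,p)}$.

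For the odd case $k=2l+1$ I would first apply Theorem \ref{Hardy} with weight $\beta$ to $h:=\Delta_{g,\rho}^{\,l}f$, obtaining $\int_M|\Delta_{g,\rho}^{\,l}f|^p\rho^{-(p+\beta)}\,dV_g\le\bigl(\tfrac{p}{n-p-\beta}\bigr)^p\int_M|\pa_\rho\Delta_{g,\rho}^{\,l}f|^p\rho^{-\beta}\,dV_g$, and then invoke the already proved part (i) with $\beta$ replaced by $p+\beta$, giving $\int_M|f|^p\rho^{-((2l+1)p+\beta)}\,dV_g\le c(n,2l,p+\beta,p)\int_M|\Delta_{g,\rho}^{\,l}f|^p\rho^{-(p+\beta)}\,dV_g$; composing the two is \eqref{eq:odd}. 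The admissibility to verify is that $p+\beta$ falls in the range of part (i), i.e. $n(1-p)<p+\beta<n-2lp$, equivalently $n-(n+1)p<\beta<n-(2l+1)p$ (the upper endpoint being the hypothesis and the lower one being exactly the threshold that makes $c(n,2l,p+\beta,p)$ finite and positive). Then \eqref{eq:quantiodd} follows by feeding the already established \eqref{eq:quantieven}, with weight $p+\beta$, into this composition and multiplying through by $\tfrac{p^p}{(n-p-\beta)^p}\,c(n,2l,p+\beta,p)$; since $n-2lp-(p+\beta)=n-(2l+1)p-\beta$ and $2lp+(p+\beta)=(2l+1)p+\beta$, the remainder takes precisely the stated form.

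The sharpness of \eqref{eq:even} and \eqref{eq:odd} I would test with the family $f_\de(x)=\vphi(\rho(x))\bigl(1-\vphi(\de^{-1}\rho(x))\bigr)\rho(x)^{-(n-kp-\beta)/p}$ already used for Lemma \ref{onetwolemma} and Theorem \ref{Rellich}: the left hand side of \eqref{eq:even} (or \eqref{eq:odd}) evaluated at $f_\de$ grows like $-|S^{n-1}|\ln\de\to\infty$ as $\de\to0^+$, whereas expanding $\Delta_{g,\rho}^{\,l}f_\de$ (respectively $\pa_\rho\Delta_{g,\rho}^{\,l}f_\de$) shows that every term arising from differentiating the cut offs or from the radial derivatives of $J'/J$ contributes only $O(1)$, so the quotient of the two sides tends to the asserted constant. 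I expect the genuine difficulties to be threefold: first, the bookkeeping that keeps each intermediate weight $\beta_i$ and $p+\beta$ inside its admissible interval — this is what forces the endpoints $n(1-p)$ and $n-(n+1)p$ and the restriction $p<n/k$; second, justifying that the iterates $\Delta_{g,\rho}^{\,j}f$ are legitimate inputs for Theorems \ref{Hardy}, \ref{Rellichonetwo} and \ref{Rellich} in spite of the $1/\rho$ singularity of $\Delta_{g,\rho}$ at $P$, which is cleanest if one first proves the inequalities for $f\in C_0^\infty(M\setminus\{P\})$, where all iterates are smooth and compactly supported away from the pole, and then removes $\{P\}$ using that it has zero capacity for $n\ge3$; and third, the routine but lengthy asymptotics of $\Delta_{g,\rho}^{\,l}f_\de$ needed to carry out the sharpness computation.
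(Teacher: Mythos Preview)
Your proposal is correct and follows exactly the approach the paper indicates: the paper merely states ``Iterating the weighted Hardy and Rellich inequalities \ldots\ we obtain the following weighted Rellich inequality for higher order derivatives \ldots\ The detail proof is left to the readers,'' and your scheme of chaining Theorem~\ref{Rellich} with shifted weights $\beta_i=\beta+2ip$ for the even case, prefacing it with one application of Theorem~\ref{Hardy} for the odd case, and retaining only the last quantitative remainder for \eqref{eq:quantieven} and \eqref{eq:quantiodd}, is precisely that iteration. Your treatment is in fact more careful than the paper's on two points it glosses over: the verification that each intermediate weight stays in the admissible range (and your derivation of the lower endpoint $n-(n+1)p$ is the correct one---the ``$n-n(p+1)$'' in the theorem statement is a typo, cf.\ the introduction), and the regularity issue for the iterates $\Delta_{g,\rho}^{\,j}f$ at the pole, which your density argument via $C_0^\infty(M\setminus\{P\})$ handles cleanly.
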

For the critical case $\beta =n -kp$, we have the following critical Rellich inequalities on $M$ which generalize Theorem \ref{CH} and Theorem \ref{criticalRellich} to higher order derivatives.

\begin{theorem}\label{criticalRellichhigher}
Let $(M,g)$ be an $n-$dimensional Cartan--Hadamard manifold and let $k$ be a positive integer. Suppose that $n \geq 3$ and $p\in (1,n/k)$. Then for any function $f \in C_0^\infty(B_1(P))$ the following inequalities hold true.
\begin{description}
\item (i) If $k =2l$, $l\geq 1$ then we have
\begin{equation}\label{eq:criticaleven}
\int_{B_1(P)} \frac{|f|^p}{\rho^{n} \lt(\ln \frac1{\rho}\rt)^p} dV_g \leq \lt( p'\frac{2^{1-l}}{(l-1)!}\prod_{i=0}^{l-1} \frac1{n -2i-2}\rt)^p \int_{B_1(P)} \frac{|\De_{g,\rho}^lf|^p }{\rho^{n-2lp}} dV_g,
\end{equation}
here $p' = p/(p-1)$, and if $K_M \leq -b \leq 0$ then we have
\begin{align}\label{eq:QCReven}
&\lt( p'\frac{2^{1-l}}{(l-1)!}\prod_{i=0}^{l-1} \frac1{n -2i-2}\rt)^p \int_{B_1(P)} \frac{|\De_{g,\rho}^lf|^p }{\rho^{n-2lp}} dV_g\notag\\
&\quad \geq \int_{B_1(P)} \frac{|f|^p}{\rho^{n} \lt(\ln \frac1{\rho}\rt)^p} dV_g + \frac{3b(n-1)p}{p-1} \int_{B_1(P)}\frac{|f|^p}{\rho^{n-2} (\ln \frac1\rho)^{p-1}(\pi^2 + b\rho^2)} dV_g.
\end{align}
\item (ii) If $k =2l+1$, $l\geq 1$ then we have
\begin{equation}\label{eq:criticalodd}
\int_{B_1(P)} \frac{|f|^p}{\rho^{n}\lt(\ln \frac1{\rho}\rt)^p} dx \leq \lt(p'\frac1{2^l l!}\prod_{i=0}^{l-1} \frac1{n -2i-2}\rt)^p \int_{B_1(P)} \frac{|\pa_\rho \De_{g,\rho}^l f|^p}{\rho^{n-(2l+1)p}} dV_g,
\end{equation}
and if $K_M \leq -b \leq 0$ then we have
\begin{align}\label{eq:QCRodd}
&\lt(p'\frac1{2^l l!}\prod_{i=0}^{l-1} \frac1{n -2i-2}\rt)^p \int_{B_1(P)} \frac{|\pa_\rho \De_{g,\rho}^l f|^p}{\rho^{n-(2l+1)p}} dV_g\notag\\
&\quad\geq \int_{B_1(P)} \frac{|f|^p}{\rho^{n}\lt(\ln \frac1{\rho}\rt)^p} dx + \frac{3b(n-1)p}{p-1} \int_{B_1(P)}\frac{|f|^p}{\rho^{n-2} (\ln \frac1\rho)^{p-1}(\pi^2 + b\rho^2)} dV_g.
\end{align}
\end{description}
Furthermore, the inequalities \eqref{eq:criticaleven} and \eqref{eq:criticalodd} are sharp.
\end{theorem}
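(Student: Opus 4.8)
The plan is to obtain both families \eqref{eq:criticaleven}--\eqref{eq:criticalodd} purely by iterating the second-order inequalities already proved, so that the sharp constants fall out of a telescoping product, and to read off sharpness from the same logarithmic test functions used for Theorem \ref{CH}. For the even case $k=2l$ I would start from the critical Rellich inequality \eqref{eq:criticalRellich} of Theorem \ref{criticalRellich} (admissible since $n\ge 3$, $p\in(1,n)$), which controls $\int_{B_1(P)}\frac{|f|^p}{\rho^n(\ln\frac1\rho)^p}dV_g$ by $\lt(\frac{p}{(p-1)(n-2)}\rt)^p\int_{B_1(P)}\frac{|\De_{g,\rho}f|^p}{\rho^{n-2p}}dV_g$, and then climb one order at a time by applying the subcritical weighted Rellich inequality \eqref{eq:Rellich} of Theorem \ref{Rellich} to the function $\De_{g,\rho}^{j-1}f$ with weight exponent $\beta_j=n-2jp$, successively for $j=2,3,\dots,l$. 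With $\beta=\beta_j$ one checks $n(p-1)+\beta=p(n-2j)$ and $n-2p-\beta=2p(j-1)$, so the admissibility $-n(p-1)<\beta_j<n-2p$ reduces to $1<j$ and $2j<n$ (the latter because $p\in(1,n/(2l))$ forces $2l<2lp<n$), and the constant produced at step $j$ is exactly $\bigl(2(j-1)(n-2j)\bigr)^p$. Multiplying these $l-1$ inequalities with the critical Rellich estimate and simplifying $\prod_{j=2}^{l}2(j-1)(n-2j)=2^{l-1}(l-1)!\,(n-4)(n-6)\cdots(n-2l)$, so that $\frac{p-1}{p}(n-2)(n-4)\cdots(n-2l)=\frac{p-1}{p}\prod_{i=0}^{l-1}(n-2i-2)$, yields
\[
\int_{B_1(P)}\frac{|\De_{g,\rho}^l f|^p}{\rho^{n-2lp}}dV_g\ \ge\ \lt(\frac{p-1}{p}\,2^{l-1}(l-1)!\prod_{i=0}^{l-1}(n-2i-2)\rt)^p\int_{B_1(P)}\frac{|f|^p}{\rho^n(\ln\frac1\rho)^p}dV_g,
\]
which is \eqref{eq:criticaleven}; for $l=1$ the intermediate chain is empty and this is Theorem \ref{criticalRellich} itself.

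For the odd case $k=2l+1$ I would combine \eqref{eq:criticaleven} with a single application of the subcritical weighted Hardy inequality \eqref{eq:subcriticalH} of Theorem \ref{Hardy} to $\De_{g,\rho}^l f$ with $\beta=n-(2l+1)p<n-p$, which gives $\int_{B_1(P)}\frac{|\De_{g,\rho}^l f|^p}{\rho^{n-2lp}}dV_g\le\bigl(\frac1{2l}\bigr)^p\int_{B_1(P)}\frac{|\pa_\rho\De_{g,\rho}^l f|^p}{\rho^{n-(2l+1)p}}dV_g$; since $2l\cdot 2^{l-1}(l-1)!=2^l l!$, multiplying by the even-case constant produces exactly $p'\frac1{2^l l!}\prod_{i=0}^{l-1}\frac1{n-2i-2}$, i.e. \eqref{eq:criticalodd}. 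The quantitative statements \eqref{eq:QCReven}, \eqref{eq:QCRodd} follow along the same chains: in \eqref{eq:QCReven} one replaces the bottom step by the quantitative critical Rellich inequality \eqref{eq:quantiCR}, retaining only its last remainder term (the one in $\int\frac{|f|^p}{\rho^{n-2}(\ln\frac1\rho)^{p-1}(\pi^2+b\rho^2)}dV_g$) and discarding the other nonnegative terms, while keeping the $l-1$ subcritical Rellich steps in their plain form; a short bookkeeping shows that after the telescoping cancellation the surviving remainder acquires precisely the coefficient $\frac{3b(n-1)p}{p-1}$. Then \eqref{eq:QCRodd} follows from \eqref{eq:QCReven} together with the plain Hardy step, the factor $(2l)^p$ cancelling against the ratio of the odd and even constants.

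Finally, for sharpness I would use the family $f_\de(x)=\bigl(\ln\frac1{\rho(x)}\bigr)^{\frac{p-1}{p}-\de}\vphi(\rho(x))$, with $\vphi$ a cut-off near $P$ and $\de>0$ small, exactly as in Theorem \ref{CH}: one has $\int_{B_1(P)}\frac{|f_\de|^p}{\rho^n(\ln\frac1\rho)^p}dV_g\to\infty$ as $\de\to0^+$, while an induction through $\De_{g,\rho}=\pa_\rho^2+\bigl(\frac{n-1}{\rho}+\frac{J'}{J}\bigr)\pa_\rho$ shows that $\De_{g,\rho}^l f_\de$ has leading behaviour $C_l\,\rho^{-2l}\bigl(\ln\frac1\rho\bigr)^{-\frac1p-\de}$ near $P$ with $|C_l|\to\frac{p-1}{p}2^{l-1}(l-1)!\prod_{i=0}^{l-1}(n-2i-2)$, and $\pa_\rho\De_{g,\rho}^l f_\de$ the same up to an extra factor $-2l\rho^{-1}$, all other contributions being $O(1)$ in the relevant weighted norm. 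Hence the quotient of the two sides of \eqref{eq:criticaleven} (resp. \eqref{eq:criticalodd}) tends to the $p$-th power of $\frac{p-1}{p}2^{l-1}(l-1)!\prod_{i=0}^{l-1}(n-2i-2)$ (resp. $\frac{p-1}{p}2^l l!\prod_{i=0}^{l-1}(n-2i-2)$), whose reciprocal is the asserted constant. I expect the main obstacle to be entirely computational: one must carry out the repeated differentiation of $\rho^{-2j}(\ln\frac1\rho)^{\al}$ through $\De_{g,\rho}$ and verify that the $\frac{J'}{J}$-contributions (which are $O(\rho)$ near $P$ by \eqref{eq:density}) and the subleading logarithmic powers perturb each weighted norm only by $O(1)$, leaving the telescoping product of constants as the exact leading order.
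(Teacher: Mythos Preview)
Your proposal is correct and follows essentially the same approach as the paper: the paper states that Theorem \ref{criticalRellichhigher} is obtained by ``iterating the weighted Hardy and Rellich inequalities (both in the subcritical and critical cases)'' and leaves all details to the reader, which is precisely the telescoping chain you describe (critical Rellich \eqref{eq:criticalRellich} at the bottom, subcritical Rellich \eqref{eq:Rellich} applied $l-1$ times with $\beta_j=n-2jp$, plus one Hardy step \eqref{eq:subcriticalH} in the odd case), and your verification of the constants, the admissibility $-n(p-1)<\beta_j<n-2p$, and the remainder coefficient $\frac{3b(n-1)p}{p-1}$ via \eqref{eq:quantiCR} are all accurate. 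Your sharpness argument with $f_\de(x)=(\ln\frac1{\rho})^{\frac{p-1}{p}-\de}\vphi(\rho)$ is likewise the natural extension of the paper's test-function computations in Theorems \ref{CH} and \ref{criticalRellich}; in fact you supply considerably more detail than the paper does.
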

We emphasize here that in the Euclidean space $M =\R^n$, Theorems \ref{Rellichonetwo}, \ref{Rellich}, \ref{criticalRellich}, \ref{Rellichhigher} and \ref{criticalRellichhigher} was recently proved by the author \cite{Nguyen2017} (The same inequalities on radial functions was previously proved by Adimurthi et al. \cite{AGS2006} and by Adimurthi and Santra \cite{AS2009}). More precisely, in \cite{Nguyen2017}, the author also proved the generalizations of these inequalities on $\R^n$ to more general class of homogeneous groups equipped with any homogeneous quasi-norm with the same best constants.

\section{Hardy and Rellich inequalities in hyperbolic spaces}
We conclude this section by giving some concrete examples on the $n-$dimensional hyperbolic spaces $\mathbb H^n$. We will use the Poincar\'e conformal disc model for hyperbolic spaces $\mathbb H^n$, i.e.,  the underlying space which we consider is the unit ball 
\[
\mathbb B_n =\{x =(x_1,\ldots,x_n)\in \R^n\, :\, |x| = \sqrt{x_1^2 + \cdots + x_n^2} < 1\}
\]
equipped with metric 
\[
g(x) = \lt(\frac2{1-|x|^2}\rt)^n dx.
\]
The volume element on $\mathbb B_n$ is given by $dV = \lt(\frac2{1-|x|^2}\rt)^n dx$ and the associated Laplace--Beltrami operator is given by
\[
\De_g = \frac{(1-|x|^2)^2}4 \lt(\sum_{i=1}^n \frac{\pa^2}{\pa x_i^2} + 2(n-2)\sum_{i=1}^n \frac{x_i}{1-|x|^2} \frac{\pa}{\pa x_i}\rt)
\]
and the corresponding gradient is 
\[
\na_g = \lt(\frac{1-|x|^2}2\rt)^2 \lt(\frac{\pa}{\pa x_1},\ldots, \frac{\pa}{\pa x_n}\rt).
\]
The geodesic distance from $x$ to $0$ is $\rho(x) = \ln \frac{1+|x|}{1-|x|}$. Finally, recall that $K_{\mathbb B_n} \equiv -1$.

Our main results in this section give us several quantitative Hardy type inequalities on the hyperbolic spaces $\mathbb B_n$ as follows.

\begin{theorem}\label{Hyperbolic}
Suppose $n\geq 2$, $p\in (1,n)$ and $\beta < n-p$. Then there exists $c >0$ such that the following inequality holds for any $f \in C_0^\infty(\mathbb B_n)$
\begin{align}\label{eq:hyper}
\int_{\mathbb B_n} \frac{|\na_g f|^p}{\rho^\beta} dV_g&\geq \lt(\frac{n-p-\beta}p\rt)^p \int_{\mathbb B_n} \frac{|f|^p}{\rho^{p+ \beta}}dV_g\notag\\
&\quad + 3c(n-1) \lt(\frac{n-p-\beta}p\rt)^{p-1}\int_{\mathbb B_n} \frac{|f|^p}{\rho^{p+ \beta-2}} dx.
\end{align}
Let $B_1(0)$ denote the geodesic unit ball with center at $0$ in $\mathbb B_n$. There exists a constant $c>0$ such that the following inequality holds for any $p >1$ and $f\in C_0^\infty(B_1(0))$
\begin{align}\label{eq:hypera}
\int_{B_1(0)} \frac{|\na_g f|_g^p}{\rho^{n-p}} dV_g &\geq \lt(\frac{p-1}p\rt)^p \int_{B_1(0)}\frac{|f|^p}{\rho^n (\ln \frac1\rho)^p} dV_g\notag\\
&\quad +3C(n-1) \lt(\frac{p-1}p\rt)^{p-1} \int_{B_1(0)}\frac{|f|^p}{\rho^{n-2} (\ln \frac1\rho)^{p-1}}dx.
\end{align}
\end{theorem}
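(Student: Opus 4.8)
The plan is to derive both inequalities as direct consequences of the quantitative Hardy estimate \eqref{eq:quantitativeHb} and the quantitative critical Hardy estimate \eqref{eq:QuantiCHb} already established in Section 3, specialised to hyperbolic space where $K_{\B_n}\equiv -1$, together with one elementary pointwise comparison between the Riemannian volume element $dV_g=\lt(\frac{2}{1-|x|^2}\rt)^n dx$ and the Euclidean element $dx$.

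For \eqref{eq:hyper} I would apply \eqref{eq:quantitativeHb} with $b=1$ and $M=\B_n$. This already yields the asserted inequality with the last term replaced by $3(n-1)\lt(\frac{n-p-\beta}{p}\rt)^{p-1}\int_{\B_n}\frac{|f|^p}{\rho^{p+\beta-2}(\pi^2+\rho^2)}\,dV_g$, so it remains only to produce a constant $c>0$ such that
\[
\int_{\B_n}\frac{|f|^p}{\rho^{p+\beta-2}(\pi^2+\rho^2)}\,dV_g\ \geq\ c\int_{\B_n}\frac{|f|^p}{\rho^{p+\beta-2}}\,dx ,
\]
i.e. to bound below the function $h(x)=\frac{1}{\pi^2+\rho(x)^2}\lt(\frac{2}{1-|x|^2}\rt)^n$. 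Writing $r=|x|\in[0,1)$ and $\rho(x)=\ln\frac{1+r}{1-r}$, the quantity $h$ depends on $r$ alone, is continuous and strictly positive on $[0,1)$, and $h(r)\to+\infty$ as $r\to1^-$ because $(1-r^2)^{-n}$ blows up like a power of $(1-r)^{-1}$ whereas $\pi^2+\rho^2$ grows only like $\bigl(\log\frac1{1-r}\bigr)^2$. Hence $c:=\inf_{\B_n}h>0$, and \eqref{eq:hyper} follows.

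For \eqref{eq:hypera} I would repeat the argument starting from \eqref{eq:QuantiCHb} with $b=1$ and $P=0$. Now the integration runs over the geodesic unit ball $B_1(0)=\{x\in\B_n:\ln\frac{1+|x|}{1-|x|}<1\}$, which is the Euclidean ball of radius $\frac{e-1}{e+1}$ and is compactly contained in $\B_n$; on it $\frac{2}{1-|x|^2}\geq 2$ and $\pi^2+\rho^2\leq\pi^2+1$, so $(\pi^2+\rho^2)^{-1}\lt(\frac{2}{1-|x|^2}\rt)^n\geq\frac{2^n}{\pi^2+1}=:C>0$. Then
\[
\int_{B_1(0)}\frac{|f|^p}{\rho^{n-2}(\ln\frac1\rho)^{p-1}(\pi^2+\rho^2)}\,dV_g\ \geq\ C\int_{B_1(0)}\frac{|f|^p}{\rho^{n-2}(\ln\frac1\rho)^{p-1}}\,dx ,
\]
and \eqref{eq:hypera} is immediate from \eqref{eq:QuantiCHb}.

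The only step demanding real care is the lower bound for $h$ near the conformal boundary $|x|=1$ in the first part: since $\B_n$ is non-compact, continuity alone does not suffice, but the logarithmic growth of the geodesic distance is overwhelmed by the polynomial blow-up of the conformal factor, so the density ratio stays bounded away from zero. The remainder is a routine substitution of $b=1$ into the Section 3 estimates; note that Gauss's lemma is already incorporated into \eqref{eq:quantitativeHb} and \eqref{eq:QuantiCHb}, so nothing further is needed to handle the full gradient $|\na_g f|_g$.
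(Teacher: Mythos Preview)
Your proposal is correct and follows essentially the same route as the paper: specialise \eqref{eq:quantitativeHb} and \eqref{eq:QuantiCHb} to $b=1$, then absorb the factor $(\pi^2+\rho^2)^{-1}\,dV_g$ into $dx$ via the pointwise bound $\lt(\frac{2}{1-|x|^2}\rt)^n\geq C(\pi^2+\rho^2)$, whose positivity near the conformal boundary you justify exactly as the paper does (exponential growth of the conformal factor beats the quadratic growth in $\rho$). Your handling of \eqref{eq:hypera} via compactness of $B_1(0)$ in $\B_n$ is a slight simplification of the paper's argument, which invokes the same global pointwise inequality on the smaller domain, but the content is identical.
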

\begin{proof}
From the formula for $\rho(x)$, we have $|x| = \frac{e^{\rho} -1}{e^{\rho}+1}$. Hence
\[
\lt(\frac1{1-|x|^2}\rt)^n = \lt(\frac{(1+e^\rho)^2}{4e^\rho}\rt)^n \geq C(\pi^2 + \rho^2)
\]
for some $C >0$. Thus Theorem \ref{Hyperbolic} follows from \eqref{eq:quantitativeHb}, \eqref{eq:QuantiCHb} and the previous inequality.
\end{proof}
Note that \eqref{eq:hyper} extends a result of Kombe and \"Ozaydin (see \cite[Theorem $3.1$]{KO2009}) in the case $p =2$ to any $p\in (1,n)$.

We next consider the Rellich inequalities. Denote by $\pa_r = \frac{x}{|x|} \cdot \na$ the radial derivative in $\R^n$, and denote by
\[
\De_r = \pa_r^2 + \frac{n-1} r\pa_r
\]
the radial Laplace in $\R^n$. It was proved by Machihara, Ozawa and Wadade \cite{MOW2017} that
\[
\int_{\R^n} |\De_r u|^2 dx \leq \int_{\R^n} |\De u|^2 dx
\]
for any $u\in C_0^\infty(\R^n)$. Our next results shows that such a result also holds true on hyperbolic space $\mathbb H^n$ (even in the weighted form).

\begin{theorem}\label{MOWtype}
Let $n \geq 3$ and $-2 < \be \leq n-4$. It holds
\begin{equation}\label{eq:MOWtypeonH}
\int_{\mathbb B_n} |\De_{g,\rho} f|^2 \rho^{-\beta} dV_g \leq \int_{\B_n} |\De_g f|^2 \rho^{-\beta} dV_g,\quad f\in C_0^\infty(\B_n).
\end{equation}
Furthermore, equality holds in \eqref{eq:MOWtypeonH} if and only if $f$ is radial function.
\end{theorem}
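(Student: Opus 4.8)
The plan is to expand both sides in geodesic polar coordinates and reduce the inequality to a one-dimensional (in the radial variable) comparison, controlling the cross terms by a careful integration by parts. Writing $x=\text{\rm Exp}_0(\rho u)$ and $F(\rho,u)=f(\text{\rm Exp}_0(\rho u))$, the radial Laplacian is $\De_{g,\rho}f=\pa_\rho^2 F+\big(\tfrac{n-1}\rho+\tfrac{J'}{J}\big)\pa_\rho F$, while the full Laplace–Beltrami operator decomposes as $\De_g f=\De_{g,\rho}f+\tfrac1{\rho^2}\De_{S}F$, where $\De_S$ is (essentially) the Laplacian on the geodesic sphere acting on the angular variables. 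Since $K_{\mathbb B_n}\equiv-1$ the density $J(u,\rho)=J_1(\rho)=\big(\tfrac{\sinh\rho}{\rho}\big)^{n-1}$ depends only on $\rho$, which is what makes the computation tractable. Expanding $|\De_g f|^2=|\De_{g,\rho}f|^2+\tfrac2{\rho^2}\,\De_{g,\rho}f\cdot\De_S F+\tfrac1{\rho^4}|\De_S F|^2$ and integrating against $\rho^{-\beta}\,dV_g=\rho^{n-1-\beta}J_1(\rho)\,d\rho\,du$, the claim \eqref{eq:MOWtypeonH} becomes the statement that the total contribution of the last two terms is nonnegative, with equality exactly when the angular part vanishes identically, i.e. when $f$ is radial.

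The main step is therefore to show
\[
\int_{S^{n-1}}\!\!\int_0^\infty\Big(\frac1{\rho^4}|\De_S F|^2+\frac2{\rho^2}\,\De_{g,\rho}F\cdot\De_S F\Big)\rho^{n-1-\beta}J_1(\rho)\,d\rho\,du\ \ge\ 0.
\]
For this I would expand $F(\rho,u)=\sum_k F_k(\rho)Y_k(u)$ in spherical harmonics on $S^{n-1}$, so that $\De_S Y_k=-\mu_k Y_k$ with $\mu_k=k(k+n-2)\ge0$. By orthonormality the inequality splits into a sum over $k$, and for each fixed $k$ it reduces to a one-dimensional inequality for $G=F_k$ of the shape
\[
\int_0^\infty\Big(\mu_k^2\,G^2\,\rho^{-4}-2\mu_k\,\rho^{-2}\,G\,\big(G''+(\tfrac{n-1}\rho+\tfrac{J_1'}{J_1})G'\big)\Big)\rho^{n-1-\beta}J_1(\rho)\,d\rho\ \ge\ 0 .
\]
I would integrate the cross term by parts (the boundary terms vanish because $G$ has compact support away from where things blow up, using $\beta\le n-4$ and $n\ge3$ to kill the $\rho\to0$ endpoint), turning $-2\mu_k\int \rho^{n-3-\beta}J_1 G G''\,d\rho$ and the first-order piece into $2\mu_k\int \rho^{n-3-\beta}J_1 (G')^2\,d\rho$ plus lower-order terms with coefficient $2\mu_k\big(\tfrac{(n-3-\beta)(n-2-\beta)}{\cdots}\big)$-type constants coming from differentiating $\rho^{n-3-\beta}$, together with a genuinely nonnegative contribution $2\mu_k\int \rho^{n-3-\beta}J_1'/J_1\cdot(\cdots)$ since $J_1'/J_1=(n-1)(\coth\rho-\tfrac1\rho)\ge0$. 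The constraint $-2<\beta\le n-4$ is exactly what is needed to make the resulting quadratic form in $(G,G')$ pointwise nonnegative (the zeroth-order coefficient $\mu_k^2\rho^{-4}+2\mu_k\cdot(\text{const})\rho^{-4}$ stays nonnegative and the $(G')^2$ term has nonnegative coefficient), so each $k\ge1$ mode contributes a positive amount and the $k=0$ mode contributes exactly zero; this simultaneously gives the inequality and the equality case.

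The hard part will be bookkeeping the constants in the integration by parts so that the coefficient of the leftover zeroth-order term $\mu_k\int\rho^{n-5-\beta}J_1 G^2\,d\rho$ combines with $\mu_k^2\int\rho^{n-5-\beta}J_1 G^2$ to something manifestly $\ge0$ under $-2<\beta\le n-4$, rather than merely conditionally so; one should double-check that no sign is lost when the $J_1'/J_1$ and $J_1''/J_1$ terms are carried through, and in particular that the inequality is not just true "on average in $\rho$" but survives mode-by-mode. An alternative, cleaner route that avoids the spherical-harmonic expansion is to run the same integration by parts directly on $\int \rho^{-2}\De_{g,\rho}F\cdot\De_S F\,dV_g$, moving $\De_S$ off one factor (it is symmetric on $S^{n-1}$) and $\pa_\rho$ off the other, to land on $\int \rho^{-4}|\na_S(\rho\pa_\rho F)|^2$-type nonnegative quantities plus a curvature term proportional to $J_1'/J_1\ge0$; I would present whichever of the two bookkeeping paths produces the boundary-term conditions $-2<\beta\le n-4$ most transparently, and note that the $\beta=0$ case recovers the announced Machihara–Ozawa–Wadade-type inequality $\int_{\mathbb B_n}|\De_{g,\rho}f|^2\,dV_g\le\int_{\mathbb B_n}|\De_g f|^2\,dV_g$.
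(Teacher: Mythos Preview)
Your overall strategy---expand $f$ in spherical harmonics and reduce the question to a one-dimensional inequality for each mode $k\ge1$---is exactly the route the paper takes. But there is a genuine error in your decomposition of $\De_g$. In hyperbolic space the metric in geodesic polar coordinates is $d\rho^2 + \sinh^2\!\rho\, g_{S^{n-1}}$, so
\[
\De_g \;=\; \De_{g,\rho} \;+\; \frac{1}{\sinh^2\rho}\,\De_{S^{n-1}},
\]
not $\De_{g,\rho} + \rho^{-2}\De_S$. (In the disc model the paper writes this coefficient as $\big(\tfrac{1-r^2}{2r}\big)^2$.) This single change replaces every $\rho^{-2}$ and $\rho^{-4}$ in your one-dimensional inequality by $\sinh^{-2}\rho$ and $\sinh^{-4}\rho$. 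Consequently the zeroth-order coefficient after integration by parts is no longer a constant multiple of $\rho^{-4}$ but a transcendental function of $\rho$, and your claim that the quadratic form in $(G,G')$ is ``pointwise nonnegative'' with zeroth-order piece $\mu_k^2\rho^{-4} + 2\mu_k\cdot(\text{const})\cdot\rho^{-4}$ simply does not describe the actual expression.

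With the correct factor the argument requires more than a pointwise sign check. The paper integrates by parts (via $2G\De_g G=\De_g G^2-2|\na_g G|_g^2$), writes the resulting $G^2$-coefficient in terms of $\De_g\big(\rho^{-\be}\sinh^{-2}\rho\big)$, and then needs an auxiliary weighted Hardy inequality
\[
\int_{\B_n} |\na_g u|_g^2\,\frac{1}{\rho^{\be}\sinh^{2}\rho}\,dV_g \;\ge\; \frac{(n-\be-4)^2}{4}\int_{\B_n} \frac{u^2}{\rho^{\be}\sinh^{4}\rho}\,dV_g
\]
(itself proved by a separate integration by parts using $J_1'\ge0$, $J_1\ge1$) to trade the $(G')^2$-term for an extra positive multiple of the $G^2$-term. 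Only after adding this extra $\tfrac{(n-\be-4)^2}{2}$ is the remaining pointwise inequality true; the paper verifies it by expanding $\sinh$ and $\cosh$ as power series in $\rho$ and checking the sign of each Taylor coefficient, which is where the constraints $-2<\be\le n-4$ and $k\ge1$ are finally used. Your outline is on the right track structurally but underestimates both the form of the angular term and the amount of work needed to close the sign.
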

\begin{proof}
Denote $r =|x| =\frac{e^{\rho} -1}{e^{\rho} +1}$, then 
\[
\De_g = \frac{(1-r^2)^2}4 \De + (n-2)\frac{1-r^2}{2} r \pa_r,\quad \De_{g,\rho} = \frac{(1-r^2)^2}4 \De_r + (n-2)\frac{1-r^2}{2} r\pa_r.
\]
For any function $f \in C_0^\infty(\B_n)$, we decompose it into spherical harmonic as
\begin{equation}\label{eq:spherical}
f(x) = \sum_{k=0}^\infty f_k(r) \phi_k(\si),\quad \si \in S^{n-1},\, x = r\si,
\end{equation}
where $\phi_k$ is orthonormal eigenfunction of the Laplace--Beltrami operator on the sphere $S^{n-1}$ with respect to eigenvalue $c_k = k(n+k-2)$ with $k =0,1,2,\ldots$. The function $f_k$ belongs to $C_0^\infty(\B_n)$ and satifies $f_k(r) = O(r^k)$, $f'_k(r) =O(r^{k-1})$ as $r \downarrow 0$. In particular, we have $\phi_0\equiv 1$ and $f_0(r) = \frac{1}{n\om_n}\int_{S^{n-1}} f(r\si) d\si$. From the decomposition of $f$, we have
\[
\De_{g,\rho} f(x) = \sum_{k=0}^\infty \De_{g} f_k(r)\, \phi_k(\si),
\]
and
\[
\De_g f(x) = \sum_{k=0}^\infty\lt(\De_{g} f_k(r) -c_k \frac{(1-r^2)^2}4 \frac{f_k(r)}{r^2}\rt)\phi_k(\si).
\]
Thus, to prove \eqref{eq:MOWtypeonH}, it's enough to verify that
\begin{equation}\label{eq:abcd}
c_k\int_{\B_n} \frac{f_k^2}{\rho^\be} \lt(\frac{1-r^2}{2r}\rt)^4 dV_g -2\int_{\B_n} (f_k \De_g f_k) \, \frac1{\rho^\beta}\lt(\frac{1-r^2}{2r}\rt)^2 dV_g \geq 0,\qquad k\geq 1.
\end{equation}
Note that $2 f_k \De_g f_k = \De_g f_k^2 -2 |\na_g f_k|_g^2$. Hence, by using integration by parts, \eqref{eq:abcd} is equivalent to
\begin{multline}\label{eq:abcd1}
c_k\int_{\B_n} \frac{f_k^2}{\rho^\be} \lt(\frac{1-r^2}{2r}\rt)^4 dV_g +2\int_{\B_n} |\na_g f_k|_g^2  \frac1{\rho^\beta}\lt(\frac{1-r^2}{2r}\rt)^2 dV_g\\
 -\int_{\B_n} f_k^2 \De_g\lt(\frac1{\rho^\beta}\lt(\frac{1-r^2}{2r}\rt)^2\rt) dV_g \geq 0,
\end{multline}
for any $k \geq 1$. Remark that $r =\frac{e^\rho-1}{e^\rho +1}$ and hence 
\[
\frac1{\rho^\beta}\lt(\frac{1-r^2}{2r}\rt)^2 = \frac1{\rho^\be \sinh^2 \rho} =: k(\rho).
\] 
From \eqref{eq:Lapofradial}, we have
\begin{align}\label{eq:Lap}
\De_g k(\rho) &= k''(\rho) + (n-1) \frac{\cosh \rho}{\sinh \rho} k'(\rho)\notag\\
&=- k(\rho)\lt(2 +2(n-4)\frac{\cosh^2\rho}{\sinh^2\rho}-  \be(\be +1) \frac1{\rho^{2}} +\beta(n-5) \frac{\cosh \rho}{\rho \sinh \rho} \rt).
\end{align}

We next show that 
\begin{equation}\label{eq:sosanh}
\int_{\B_n} |\na_g u|_g^2  \frac1{\rho^\beta}\lt(\frac{1-r^2}{2r}\rt)^2 dV_g \geq \frac{(n-\beta -4)^2}4\int_{\B_n} \frac{u^2}{\rho^\be} \lt(\frac{1-r^2}{2r}\rt)^4 dV_g,
\end{equation}
for any radial function $u\in C_0^\infty(\B_n)$. Define the function $F$ on $[0,\infty)$ by
\[
F(\rho) = u(r), \quad r = \frac{e^\rho-1}{e^\rho +1}.
\]
Then, \eqref{eq:sosanh} is equivalent to
\begin{equation}\label{eq:sosanh1}
\int_0^\infty (F'(\rho))^2\rho^{n-\be-3} J_1(\rho)^{\frac{n-3}{n-1}} d\rho \geq \frac{(n-\beta -4)^2}4\int_0^\infty F(\rho)^2 \rho^{n-\beta -5} J_1(\rho)^{\frac{n-5}{n-1}} d\rho,
\end{equation}
Recall that $J_1(\rho) = (\frac{\sinh \rho}{\rho})^{n-1}$. Indeed, using integration by parts and $\be < n-4$ we get
\begin{align*}
\int_0^\infty F(\rho)^2 \rho^{n-\beta -5} J_1(\rho)^{\frac{n-5}{n-1}} d\rho& = -\frac{2}{n-\beta -4} \int_0^\infty F(\rho) F'(\rho) \rho^{n-\beta -4} J_1(\rho)^{\frac{n-5}{n-1}} d\rho\\
&\quad - \frac{1}{n-\beta -4} \int_0^\infty F(\rho)^2 \rho^{n-\beta -4} J_1'(\rho)J_1(\rho)^{-\frac{4}{n-1}} d\rho.
\end{align*}
Applying H\"older inequality and using $J_1' \geq 0$, $J_1 \geq 1$, we get
\begin{align*}
\int_0^\infty F(\rho)^2 \rho^{n-\beta -5} J_1(\rho)^{\frac{n-5}{n-1}} d\rho& \leq \frac{4}{(n-4-\beta)^2} \int_0^\infty (F'(\rho))^2 \rho^{n-\beta -3}J_1(\rho)^{\frac{n-5}{n-1}} d\rho\\
&\leq \frac{4}{(n-4-\beta)^2} \int_0^\infty (F'(\rho))^2 \rho^{n-\beta -3}J_1(\rho)^{\frac{n-3}{n-1}} d\rho
\end{align*}
which implies \eqref{eq:sosanh1}. Consequently, the left hand side of \eqref{eq:abcd1} is at least
\begin{align*}
&\lt(c_k + \frac{(n-\be -4)^2}2\rt)\int_{\B_n} \frac{f_k^2}{\rho^\be} \lt(\frac{1-r^2}{2r}\rt)^4 dV_g \\
&\hspace{2cm} + \int_{\B_n}\frac{f_k^2}{\rho^\be} \lt(\frac{1-r^2}{2r}\rt)^4 \Big(2\sinh^2\rho +2(n-4)\cosh^2\rho - &\\
&\hspace{6.2cm}-  \be(\be +1) \frac{\sinh^2\rho}{\rho^{2}} +\beta(n-5) \frac{\sinh(2\rho)}{2\rho} \Big) dV_g.
\end{align*}
Hence, to prove \eqref{eq:abcd1}, it is enough to show
\begin{equation}\label{eq:abcd2}
c_k + \frac{(n-\be -4)^2}2 + 2\sinh^2\rho +2(n-4)\cosh^2\rho -  \be(\be +1) \frac{\sinh^2\rho}{\rho^{2}} +\beta(n-5) \frac{\sinh(2\rho)}{2\rho} \geq 0, 
\end{equation}
for $\rho >0$. It is suffice to check \eqref{eq:abcd2} for $k=1$. Expanding the exponent function in series form, the left hand side of \eqref{eq:abcd2} is equal to
\[
\frac{(n-2+k)^2+k^2-(\beta +2)^2}2 + \sum_{l=1}^{\infty}\lt((n-3) -\frac{\be(\be+1)}{(l+1)(2l+1)} + \frac{\be(n-5)}{2l+1} \rt)\frac{(2\rho)^{2l}}{(2l)!}.
\]
Since $k\geq 1$ and $-2 < \be \leq n-4$, we can easily check that
\begin{equation}\label{eq:sosanhheso}
\frac{(n-2+k)^2+k^2-(\beta +2)^2}2 >0,\quad\text{and}\quad (n-3) -\frac{\be(\be+1)}{(l+1)(2l+1)} + \frac{\be(n-5)}{2l+1} \geq 0,
\end{equation}
for any $l\geq 1$. This finishes the proof of \eqref{eq:MOWtypeonH}.

Suppose that equality holds true in \eqref{eq:MOWtypeonH} for some function $f$. Expanding $f$ in spherical harmonic expression as in \eqref{eq:spherical}. By \eqref{eq:sosanhheso}, we must have $c_k =0$ for any $k\geq 1$. This shows that $f$ is radial function.
\end{proof}

By Theorem \ref{MOWtype}, we see that in the hyperbolic space $\mathbb H^n$, the Rellich inequality \eqref{eq:Rellich} with $p=2$ is stronger than the inequality of Kombe and \"Ozaydin \cite{KO2009}: suppose $-2 < \beta < n-4$
\begin{equation}\label{eq:KO}
\frac{(n+ \beta)^2(n-4 -\be)^2}{16}\int_{\B_n} \frac{|f|^2}{\rho^{\be + 4}} dV_g \leq \int_{\B_n} \frac{|\De_g f|^2}{\rho^\beta} dV_g, \quad f \in C_0^\infty(\B_n).
\end{equation}
Similarly, \eqref{eq:quantiRellich} implies an improvements of \eqref{eq:KO}
\begin{align}\label{eq:improve}
\int_{\B_n} \frac{|\De_g f|^2}{\rho^\beta} dV_g& \geq \frac{(n+ \beta)^2(n-4 -\be)^2}{16}\int_{\B_n} \frac{|f|^2}{\rho^{\be + 4}} dV_g + 3(n-1)\frac{n+\be}2 \int_{\B_n} \frac{|\pa_\rho f|^2}{\rho^\be(\pi^2 + \rho^2)} dV_g\notag\\
&\quad + 3(n-1) \frac{(n-4-\beta)(n+\be)^2}8\int_{\B_n} \frac{|f|^2}{\rho^{\be+2}(\pi^2 + \rho^2)} dV_g.
\end{align}
It is easy to prove that
\begin{equation}\label{eq:Aq}
\int_{\B_n} \frac{|\pa_\rho f|^2}{\rho^\be(\pi^2 + \rho^2)} dV_g \geq \frac{(n-\be -4)^2}{4} \int_{\B_n} \frac{|f|^2}{\rho^{\be+2}(\pi^2 + \rho^2)} dV_g.
\end{equation}
Combining \eqref{eq:improve} and \eqref{eq:Aq} yields
\begin{align}\label{eq:improveRsao}
\int_{\B_n} \frac{|\De_g f|^2}{\rho^\beta} dV_g& \geq \frac{(n+ \beta)^2(n-4 -\be)^2}{16}\int_{\B_n} \frac{|f|^2}{\rho^{\be + 4}} dV_g\notag\\
&\quad + 3 \frac{(n-1)(n-2)(n+\be)(n-4-\be)}4\int_{\B_n} \frac{|f|^2}{\rho^{\be+2}(\pi^2 + \rho^2)} dV_g.
\end{align}
Using the simple inequality in the proof of Theorem \ref{Hyperbolic}, we prove the following improved Rellich inequality in $\mathbb H^n$.
\begin{theorem}\label{ImprovedR}
Suppose that $n\geq 4$ and $-2 < \beta < n-4$. Then there exists  a constant $C >0$ such that
\begin{align}\label{eq:improveR}
\int_{\B_n} \frac{|\De_g f|^2}{\rho^\beta} dV_g& \geq \frac{(n+ \beta)^2(n-4 -\be)^2}{16}\int_{\B_n} \frac{|f|^2}{\rho^{\be + 4}} dV_g\notag\\
&\quad + 3 C\frac{(n-1)(n-2)(n+\be)(n-4-\be)}4\int_{\B_n} \frac{|f|^2}{\rho^{\be+2}} dx,
\end{align}
for any $f \in C_0^\infty(\B_n)$.
\end{theorem}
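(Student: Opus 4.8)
The plan is to read off \eqref{eq:improveR} directly from the improved weighted Rellich inequality \eqref{eq:improveRsao}, which has already been established. The only thing left to do is to replace the extra singular term there — whose weight carries the factor $(\pi^2+\rho^2)^{-1}$ — by a term with the pure power weight $\rho^{-(\be+2)}$, and this is achieved by absorbing $\pi^2+\rho^2$ into the conformal factor $\big(2/(1-|x|^2)\big)^n$ of the hyperbolic volume element.

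Concretely, I would first rewrite the last integral of \eqref{eq:improveRsao} against Lebesgue measure, using $dV_g=\big(2/(1-|x|^2)\big)^n\,dx$:
\[
\int_{\B_n}\frac{|f|^2}{\rho^{\be+2}(\pi^2+\rho^2)}\,dV_g=\int_{\B_n}\frac{|f|^2}{\rho^{\be+2}(\pi^2+\rho^2)}\Big(\frac{2}{1-|x|^2}\Big)^n dx .
\]
Next I would invoke the elementary pointwise bound already used in the proof of Theorem \ref{Hyperbolic}: since $|x|=\tfrac{e^\rho-1}{e^\rho+1}$ one has $\big(1/(1-|x|^2)\big)^n=\big((1+e^\rho)^2/(4e^\rho)\big)^n=\cosh^{2n}(\rho/2)$, and the quotient $\cosh^{2n}(\rho/2)/(\pi^2+\rho^2)$ is continuous and strictly positive on $[0,\infty)$ and tends to $+\infty$ as $\rho\to\infty$, hence is bounded below by a constant $C_0>0$. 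Therefore $\big(2/(1-|x|^2)\big)^n\ge 2^nC_0(\pi^2+\rho^2)$ pointwise on $\B_n$, so the integral above is $\ge 2^nC_0\int_{\B_n}\frac{|f|^2}{\rho^{\be+2}}\,dx$. Plugging this lower bound into \eqref{eq:improveRsao} and setting $C:=2^nC_0$ gives \eqref{eq:improveR}.

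I do not expect any genuine obstacle here. The substantive work has already been carried out upstream — \eqref{eq:improveRsao} itself rests on the quantitative Rellich identity \eqref{eq:quantiRellich} at $p=2$ together with the one-dimensional estimate \eqref{eq:Aq} — so what remains is only the harmless change of measure and the constant comparison above. The single point deserving an explicit line is the existence of $C_0$, which follows from a compactness argument on $[0,\infty)$ combined with the exponential growth of $\cosh^{2n}(\rho/2)$; one should also observe that every integral in sight is finite for $f\in C_0^\infty(\B_n)$ since $\be+2<n-2<n$ makes $\rho^{-(\be+2)}$ locally integrable around $P$.
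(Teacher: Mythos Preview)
Your proposal is correct and follows exactly the paper's own route: the paper simply says that \eqref{eq:improveR} follows from \eqref{eq:improveRsao} together with the pointwise inequality $\bigl(1/(1-|x|^2)\bigr)^n\ge C(\pi^2+\rho^2)$ already used in the proof of Theorem~\ref{Hyperbolic}. You have merely spelled out that step in more detail (identifying the conformal factor as $\cosh^{2n}(\rho/2)$ and justifying the lower bound via continuity and growth at infinity), but the argument is the same.
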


By iterating the inequalities \eqref{eq:KO}, \eqref{eq:improveR} and \eqref{eq:hyper}, we obtain the following improved Rellich type inequalities of higher order derivatives in $\mathbb H^n$.

\begin{theorem}\label{eq:higherorder}
Suppose $n\geq 3$ and $k \in (1,n/2)$ be an integer, and $-2 < \beta < n-2k$. Then there exists a constant $C>0$ such that the following inequalities hold for any $f \in C_0^\infty(\B_n)$
\begin{description}
\item (i) If $k = 2l$, $l\geq 1$ then we have
\begin{align}\label{eq:quantievena}
\int_{\B_n} \frac{|\De_g^l f|^2 }{\rho(x)^\beta} dV_g &\geq \frac1{c(n,2l,\beta,2)}\int_{\B_n} \frac{|f|^2}{\rho(x)^{\beta + 4l}} dV_g \notag\\
&\quad + \frac{6C(n-1)}{(n-4l-\beta)c(n,2l,\beta,2)} \int_{\B_n} \frac{|f|^2}{\rho^{4l+\be -2}} dx.
\end{align}
\item (ii) If $k = 2l+1$, $l\geq 1$ then we have
\begin{align}\label{eq:quantiodda}
&\frac{4}{(n-2-\beta)^2}c(n,2l,2+\beta,2)\int_{\B_n} \frac{|\na_g\De_g^l f|_g^2 }{\rho(x)^\beta} dV_g \notag\\
&\quad \geq \int_{\B_n} \frac{|f|^2}{\rho^{\beta +2(2l+1)}} dV_g  + \frac{6C(n-1)}{(n-2(2l+1)-\beta)} \int_{\B_n} \frac{|f|^2}{\rho^{\beta + 4l}} dx.
\end{align}
\end{description}
\end{theorem}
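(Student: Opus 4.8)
The plan is to obtain both inequalities purely by iteration, starting from the second-order estimates already established in $\B_n$ — the weighted Rellich inequality \eqref{eq:KO}, its improved version \eqref{eq:improveR} of Theorem \ref{ImprovedR}, and the improved Hardy inequality \eqref{eq:hyper} of Theorem \ref{Hyperbolic} — while carrying one extra remainder term through the whole chain. The only structural point to notice at the outset is that if $f\in C_0^\infty(\B_n)$ then every iterated Laplacian $\De_g^j f$ is again in $C_0^\infty(\B_n)$, so each of these inequalities may legitimately be applied to it.

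For the even case (i), I would begin from $\int_{\B_n}|\De_g^l f|^2\rho^{-\beta}\,dV_g$ and peel off one Laplacian at a time. Applying \eqref{eq:KO} with $f$ replaced by $\De_g^{j-1}f$ and $\beta$ replaced by $\beta+4(l-j)$ gives, for $j=l,l-1,\ldots,2$,
\[
\int_{\B_n}\frac{|\De_g^j f|^2}{\rho^{\beta+4(l-j)}}\,dV_g\ \ge\ \frac{(n+\beta+4(l-j))^2(n-4-\beta-4(l-j))^2}{16}\int_{\B_n}\frac{|\De_g^{j-1}f|^2}{\rho^{\beta+4(l-j+1)}}\,dV_g,
\]
which is admissible because the hypothesis $-2<\beta<n-4l$ forces $-2<\beta+4i<n-4$ for every $0\le i\le l-1$. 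At the last step, instead of \eqref{eq:KO} I would invoke the improved Rellich inequality \eqref{eq:improveR} with $\beta$ replaced by $\beta+4(l-1)$ (legitimate since then $n\ge 5>4$ and $-2<\beta+4(l-1)<n-4$); this keeps the main term $\int|f|^2\rho^{-\beta-4l}\,dV_g$ and, crucially, produces the genuinely new term $\int|f|^2\rho^{-\beta-4l+2}\,dx$. Multiplying the whole chain together telescopes the product of the Rellich constants into $1/c(n,2l,\beta,2)$ by the very definition of $c(n,2l,\beta,p)$, while the factor left in front of the remainder equals $\frac{12C(n-1)(n-2)}{(n+\beta+4(l-1))(n-4l-\beta)\,c(n,2l,\beta,2)}$; using $\beta<n-4l$ one checks $n+\beta+4(l-1)<2(n-2)$, so this is bounded below by a fixed positive multiple of $(n-1)\big((n-4l-\beta)\,c(n,2l,\beta,2)\big)^{-1}$, and relabelling that constant $C$ yields \eqref{eq:quantievena} (the case $l=1$ being Theorem \ref{ImprovedR} itself).

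For the odd case (ii), I would first eliminate the gradient. Setting $h=\De_g^l f\in C_0^\infty(\B_n)$ and applying the Hardy inequality \eqref{eq:hyper} with $p=2$ — admissible since $\beta<n-2(2l+1)<n-2$ — gives
\[
\int_{\B_n}\frac{|\na_g\De_g^l f|_g^2}{\rho^{\beta}}\,dV_g\ \ge\ \Big(\frac{n-2-\beta}{2}\Big)^2\int_{\B_n}\frac{|\De_g^l f|^2}{\rho^{\beta+2}}\,dV_g .
\]
Multiplying by $\frac{4}{(n-2-\beta)^2}c(n,2l,2+\beta,2)$ and then applying the already-proved part (i) to $\De_g^l f$ with $\beta$ replaced by $\beta+2$ (note $-2<\beta+2<n-4l$, and $2+\beta+4l=\beta+2(2l+1)$) turns the right side into $\int|f|^2\rho^{-\beta-2(2l+1)}\,dV_g$ plus the remainder of \eqref{eq:quantievena}, whose coefficient is $\frac{6C(n-1)}{(n-4l-\beta-2)\,c(n,2l,\beta+2,2)}$; after cancelling $c(n,2l,2+\beta,2)=c(n,2l,\beta+2,2)$ this is exactly the coefficient $\frac{6C(n-1)}{n-2(2l+1)-\beta}$ in \eqref{eq:quantiodda}.

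Since every step is either a direct substitution into an inequality proved earlier in the paper or a routine multiplication, there is no genuine analytic obstacle here; the work is entirely bookkeeping. The points that need care are: checking at each iteration that the shifted exponent $\beta+4i$ (resp. $\beta+2$) still lies in the admissibility range of the inequality being used; telescoping the Rellich constants correctly into $c(n,2l,\cdot,2)$; verifying that the accumulated multiplicative factor in front of the remainder stays bounded below by a positive constant so that it can be absorbed into $C$; and keeping the remainder terms with the Euclidean measure $dx$ (as in \eqref{eq:improveR} and \eqref{eq:hyper}), which is why $dx$ — rather than $dV_g$ — appears in the remainders of \eqref{eq:quantievena} and \eqref{eq:quantiodda}.
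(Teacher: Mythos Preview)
Your proposal is correct and matches the paper's own approach exactly: the paper gives no detailed argument for Theorem~\ref{eq:higherorder} beyond the single sentence ``By iterating the inequalities \eqref{eq:KO}, \eqref{eq:improveR} and \eqref{eq:hyper}'', and your write-up is a careful execution of precisely that iteration, including the necessary checks on the admissible range of the shifted exponents and the telescoping of the constants into $c(n,2l,\beta,2)$.
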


By the same way, we can obtain the critical Rellich type inequalities in $\mathbb H^n$ for $\De_g^l$ and $\na_g \De_g^l$ and their improvements. The details are left for interest readers.


\section*{Acknowledgments}
The author would like to thank Professor Alexandru Krist\'aly for drawing our attentions to his works \cite{FKV,K1,K2}. This work was supported by the CIMI's postdoctoral research fellowship.

\end{document}